\newtheorem{theorem}{Theorem}
\numberwithin{theorem}{section}
\newtheorem{definition}[theorem]{Definition}
\newtheorem{lemma}[theorem]{Lemma}
\newtheorem{observation}[theorem]{Observation}
\newtheorem{fact}[theorem]{Fact}
\newtheorem{question}[theorem]{Question}
\newtheorem{claim}[theorem]{Claim}
\newenvironment{subproof}[1][\proofname]{%
\begin{proof}[#1]%
	}{%
\end{proof}%
}
\newenvironment{subsubproof}[1][\proofname]{%
\begin{proof}[#1]%
}{%
\end{proof}%
}
\newcommand{\ja}[1]{{\color{magenta}#1}}
\newcommand{\p}[0]{\mathcal{P}}
\newcommand{\nin}[0]{\notin}
\newcommand{\nb}[2]{N_{#2}(#1)}
\newcommand{\N}[0]{\mathbb{N}}
\newcommand{\G}[0]{Homeo_{\partial S}(S)}
\newcommand{\MCG}[0]{PMod}
\title{Non-Roelcke precompactness of groups of surface homeomorphisms}
\author{J. de la Nuez Gonz\'alez }
\email{jnuezgonzalez@gmail.com}
\address{Korean Institute of Advanced Study (KIAS)}
\date{\today}
\thanks{Work supported by Samsung Science and Technology Foundation under Project Number SSTF-BA1301-51.  }
\begin{document}

\begin{abstract}
We prove that no subgroup of the group of boundary-fixing homeomorphisms of a compact surface whose action on the interior of the surface is sufficiently transitive can be Roelcke precompact with the topology inherited from the compact-open topology.
\end{abstract}

\maketitle

\section{Introduction}
  
  Roelcke precompactness is the topological group counterpart of the model theoretic notion of $\omega$-categoricity in classical and continuous logic. As such, it has received a lot of attention in the literature from a variety of angles.
  
  \begin{definition}
  We say that a topological group $G$ is Roelcke precompact if for every neighbourhood of the identity $U$ there exists a finite subset $g_{1},g_{2},\dots g_{k}$ such that $G=\bigcup_{j=1}^{k}Ug_{j}U$.
  \end{definition}
  

  It is a result of Culler and Rosendal \cite{rosendal2013global} that the homeomorphism group of a manifold of dimension $\geq 2$ (and more generally, any group of homeomorphisms containing a specific class of elements) is not Roelcke precompact. In this note we elaborate on the general idea of their proof to show that the same property holds for a large class of groups of homeomorphisms of compact surfaces.

  Given a surface with (possibly empty) boundary $S$, from now on simply ´a surface', and some tuple $p_{1},p_{2},\dots p_{r}$ of points in the interior of $S$, $int(S)$, we denote by $S_{p_{1},p_{2},\dots p_{r}}$ the result of removing $p_{1},\dots p_{r}$ (the punctures) from $S$. We write simply $S_{r}$, $r>0$ when only interested in the homeomorphism type.
  
  The pure mapping class group of a punctured surface $S_{p_{1},\dots p_{r}}$, or $\MCG(S_{p_{1},\dots p_{r}})$ is the quotient of the pointwise stabilizer of the set of punctures in $Homeo_{\partial S}(S)$ (whose elements we can regard alternatively as homeomorphisms of $S_{p_{1},p_{2}\dots p_{m}}$ fixing the boundary) by identifying any two elements which are homotopic via a homotopy fixing the punctures. Given $g\in S_{p_{1},\dots p_{m}}$ we will denote the corresponding mapping class by $[g]$. 
  
  Accordingly, there is a natural quotient map 
  $\MCG(S_{p_{1},\dots p_{r}})\to \MCG(S_{p_{1},\dots p_{r-1}})$.
  The kernel is given by the image in $\MCG(S_{p_{1},p_{2},\dots p_{r}})$ of the point pushing map $\mathcal{P}:\pi(S_{p_{1},p_{2},\dots p_{r-1}},p_{r})\to \MCG(S_{p_{1},p_{2},\dots p_{r}})$, which plays a certain role in this paper and about which we say a few more words at the beginning of section \ref{s: 2}.

  For simplicity we assume $S$ is compact, so that the compact-open topology on $Homeo_{\partial S}(S)$ coincides with the uniform convergence topology. We will work with some fixed hyperbolic or flat Riemannian metric $d$ on $S$ compatible with its topology with respect to which the boundary is geodesic. A system of symmetric neighbourhoods of the identity is thus given by $\{U_{\epsilon}\}_{\epsilon>0}$ where $U_{\epsilon}=V_{\epsilon}\cap V_{\epsilon}^{-1}$ and $V_{\epsilon}=\{g\in G\,|\,\forall x\in S\,d(x,gx)<\epsilon\}$.

  \begin{definition}
  \label{d: embedding}Let $\zeta(S)$ the minimum positive integer $m$ such that the point pushing map $\p:\pi_{1}(S_{m-1})\to \MCG(S_{m})$ is an embedding (a fortiori, of a torsion-free group), that is $m$ is equal to:
  \begin{itemize}
  \item $4$ if $S$ is a sphere
  \item $3$ if $S$ is a projective plane
  \item $2$ if $S$ is a torus or a disk 
  \item $1$ otherwise
  \end{itemize}
  
  \end{definition}

  
  \begin{theorem}
  \label{t: simple main}Let $S$ be a compact surface possibly with boundary and $m=\zeta(S)$. If $H\leq Homeo_{\partial S}(S)$ is $m$-transitive, then $H$ with the topology inherited from the compact-open topology is not Roelcke precompact.
  \end{theorem}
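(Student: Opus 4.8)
The plan is to show that for one suitably small $\epsilon>0$ the set of double cosets $U_{\epsilon}\backslash H/U_{\epsilon}$ is infinite, which is exactly what "not Roelcke precompact" means. Fix a configuration $\mathbf{p}=(p_{1},\dots,p_{m})$ of distinct points of $int(S)$ that are pairwise at distance at least $4\delta_{0}$ for some $\delta_{0}>0$; let $X$ be the space of ordered $m$-tuples of distinct points of $int(S)$ and put $x_{0}=\mathbf{p}$. By the isotopy extension theorem the evaluation map $\mathrm{ev}\colon Homeo_{\partial S}(S)\to X$, $g\mapsto g\cdot x_{0}$, is a fibration with fibre the pointwise stabiliser $Homeo_{\partial S,\mathbf{p}}(S)$ of $\mathbf{p}$, and its connecting homomorphism $\partial\colon\pi_{1}(X,x_{0})\to\pi_{0}(Homeo_{\partial S,\mathbf{p}}(S))=\MCG(S_{m})$ restricts, on the point-pushing subgroup $\pi_{1}(S_{m-1})\hookrightarrow\pi_{1}(X,x_{0})$ of the Birman exact sequence, to $\pm\p$; in particular, since $m=\zeta(S)$, this restriction is injective with infinite torsion-free image.

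The heart of the matter is a bi-invariance statement for the mapping classes of homeomorphisms that are standard near $\mathbf{p}$. By local contractibility of $Homeo_{\partial S}(S)$ (Edwards--Kirby, in the form relative to the boundary), for every $\epsilon'>0$ there is $0<\epsilon=\epsilon(\epsilon')<\epsilon'$ such that every $u\in U_{\epsilon}$ can be joined to the identity by a path lying entirely in $U_{\epsilon'}$. Fix $\epsilon'<\delta_{0}$ and the corresponding $\epsilon$. I claim that if $h,h'\in Homeo_{\partial S}(S)$ both restrict to the identity on $\bigcup_{i}B_{\epsilon'}(p_{i})$ and $h'\in U_{\epsilon}hU_{\epsilon}$, then $[h]=[h']$ in $\MCG(S_{m})$. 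Indeed, write $h'=uhv$ with $u,v\in U_{\epsilon}$, choose paths $(u_{t})_{t},(v_{t})_{t}$ in $U_{\epsilon'}$ from the identity to $u$ and $v$, and consider the isotopy $t\mapsto u_{t}hv_{t}$ from $h$ to $h'$. Since each $v_{t}(\mathbf{p})$ lies in $\bigcup_{i}B_{\epsilon'}(p_{i})$, where $h$ is the identity, the trace of this isotopy is the loop $t\mapsto u_{t}v_{t}(\mathbf{p})$ based at $x_{0}$, which is contained in $\prod_{i}B_{2\epsilon'}(p_{i})$; as $\epsilon'<\delta_{0}$ these balls are pairwise disjoint, so that region lies inside $X$ and is a product of balls, hence simply connected, so the trace is null-homotopic in $X$. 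A standard fibration argument then gives $[h]=[h']$.

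Granting the claim, it is enough to produce, for each $n\geq1$, an element $h_{n}\in H$ which is the identity on $\bigcup_{i}B_{\epsilon'}(p_{i})$ and such that the classes $[h_{n}]\in\MCG(S_{m})$ are pairwise distinct; by the claim they then represent distinct double cosets, so $H$ is not Roelcke precompact. To this end fix a simple essential loop $\gamma$ in $S_{m-1}$, of infinite order in $\pi_{1}(S_{m-1})$ (such $\gamma$ exists in every case of Definition~\ref{d: embedding}, the group being infinite there), positioned so that $p_{m}$ lies on it while $p_{1},\dots,p_{m-1}$ lie far from it; by injectivity of $\p$ the point pushes $\p(\gamma)^{n}=\p(\gamma^{n})$ are pairwise distinct. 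I would then build $h_{n}$ by realising the loop $\gamma^{n}$ in $X$ (only the last coordinate moving) as a composition of $H$-elements, each effecting one short step of $p_{m}$ along $\gamma$, obtained from $m$-transitivity: if each such step can be taken supported in a small ball following $p_{m}$ and away from $p_{1},\dots,p_{m-1}$, then the composite $h_{n}\in H$ is the identity near $\mathbf{p}$, its class is $\p(\gamma^{n})$ up to a fixed conjugacy, and hence the $[h_{n}]$ are pairwise distinct.

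The main obstacle is precisely this localisation. Bare $m$-transitivity furnishes homeomorphisms with a prescribed effect on $\mathbf{p}$ but no control away from $\mathbf{p}$, so it does not directly supply step maps supported in small balls; and with uncontrolled step maps the composite $h_{n}$ could be collapsed to the identity, which would wreck the argument. I expect this to be handled via Effros' theorem: the transitive continuous action of the Polish group $H$ on the Polish space $X$, and on the configuration spaces attached to base configurations near $\mathbf{p}$, is micro-transitive, so the orbit maps are open; this promotes $m$-transitivity to a genuine supply of \emph{small} elements of $H$ realising small motions of configurations, and a fragmentation (or commutator) argument then turns these into the localised step maps required above, at worst enlarging the region swept while keeping $h_n$ the identity near $\mathbf{p}$. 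Alternatively one might first replace $H$ by its closure in $Homeo_{\partial S}(S)$ — still $m$-transitive, and still Roelcke precompact if $H$ is, since Roelcke precompactness passes both ways between a topological group and a dense subgroup — and exploit that a closed $m$-transitive subgroup should already contain the localised homeomorphisms one needs. Apart from this point, the proof uses only the reduction, local contractibility, and elementary fibration bookkeeping.
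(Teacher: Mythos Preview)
Your bi-invariance claim is correct as stated, but its hypothesis---that $h,h'$ be the identity on $\bigcup_i B_{\epsilon'}(p_i)$---is exactly where the argument breaks. A general $m$-transitive $H\leq\G$ need not contain \emph{any} nontrivial element that is the identity on an open set: take $H$ to be the group of real-analytic diffeomorphisms of $S$ (with the $C^0$ topology), which is $m$-transitive for every $m$, yet rigid on open sets. None of your proposed remedies produce locally supported elements. Effros' theorem yields elements of $\bar H$ that are \emph{uniformly close to the identity}, not elements with small support; fragmentation and commutator tricks presuppose structural features of $H$ that bare $m$-transitivity does not provide; and passing to the closure does not help, since a closed $m$-transitive subgroup can still be the real-analytic group above. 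So the step ``turn small elements into localised step maps'' is not available, and without it your chain of step maps produces an $h_n\in H_{\bar p}$ whose behaviour near $\bar p$ you cannot control, so your bi-invariance claim does not apply to it.

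The paper confronts precisely this obstacle but by a different mechanism. It never seeks $H$-elements that are the identity near $\bar p$. Instead, Lemma~\ref{l: nesting} (via the dichotomy on the sets $A_{q,n}$ and Lemma~\ref{l: pushing around}) manufactures $f_n\in H_{\hat p}$ together with $w_n\in U_\delta\cap G_{\hat p}$---elements of the ambient group, not of $H$---so that $b_n=w_n^{-1}f_n\in G_{\bar p}$ have pairwise distinct mapping classes. The rest of the proof then \emph{assumes} Roelcke precompactness, uses it twice to force $u_nf_n=h_{j_0}v_n$ and $u''u_0f_0=hu_nf_nu'$, and tracks how the small perturbations $u_n,v_n,u',u''$ interact with curves avoiding $\bigcup_i B(p_i,4\delta_0)$ via Lemma~\ref{l: non-linearity} and point-push corrections $\mathcal P_{\nu_n},\mathcal P_{\mu_{0,n}}$. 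The upshot is that the $[b_n]$ are trapped in a finite set $\mathcal P(\mathcal R^1)\cdots\mathcal P(\mathcal R^m)\cdot[b_0]$, a contradiction. In short: where you try to control the mapping class of an $H$-element directly, the paper controls the mapping class of a nearby $G$-element and then uses Roelcke precompactness itself to close the gap.
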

  
   \newcommand{\st}[0]{sufficiently transitive} 
  \renewcommand{\O}[0]{\mathcal{O}}
  For the remainder of the paper we will use the notation $G=Homeo_{\partial S}(S)$.
  Given $H\leq G$ we write $H_{p_{1},p_{2},\dots p_{r}}$ to denote the pointwise stabilizer of $\{p_{1},\dots p_{r}\}$.  
  
   In view of one of the open questions at the end it might be worthwhile to be somewhat more precise.
    \begin{definition}
  Let $H\leq\G$ and $\hat{p}$ a tuple of points in the interior of $S$ (possibly empty). Denote by $\O_{\hat{p}}(H)$ be the collection of orbits of points of $int(S_{\hat{p}})$ by $H_{\hat{p}}$.
  
  We say that $H$ is \emph{\st} if there exists a tuple of $\zeta(S)-1$ points $\hat{p}$ in $S$ and a non null-homotopic simple closed curve $\alpha$ in $S_{\hat{p}}$ such that $im(\alpha)$ is contained in some $O\in\O_{\hat{p}}(H)$.
  \end{definition}

  Notice that if $H$ is $\zeta(S)$-transitive then it is sufficiently transitive, so the following clearly implies \ref{t: simple main}.
  
  \begin{restatable}{theorem}{main}
  \label{t: main}Let $S$ be a compact surface. If $H\leq G$ is \st, then $H$ with the topology inherited from the compact-open topology is not Roelcke precompact.
  \end{restatable}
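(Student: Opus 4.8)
The plan is to adapt the argument of Culler and Rosendal, distinguishing infinitely many double cosets by a real-valued \emph{twisting invariant} attached to the curve $\alpha$. I begin with a reduction: it suffices to exhibit $\epsilon>0$ and a sequence $(h_{n})_{n\in\N}$ in $H$ with $h_{n}\notin U_{\epsilon}h_{m}U_{\epsilon}$ whenever $n\neq m$. Indeed, pick $\delta>0$ with $U_{\delta}$ symmetric and $U_{\delta}^{2}\subseteq U_{\epsilon}$; if $H=\bigcup_{j=1}^{k}(U_{\delta}\cap H)g_{j}(U_{\delta}\cap H)$, then two of the $h_{n}$, say $h_{n},h_{m}$ with $n\neq m$, lie in a common piece, so $h_{n}\in U_{\delta}g_{j}U_{\delta}$ and $g_{j}\in U_{\delta}h_{m}U_{\delta}$, whence $h_{n}\in U_{\delta}^{2}h_{m}U_{\delta}^{2}\subseteq U_{\epsilon}h_{m}U_{\epsilon}$, a contradiction. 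So the task is to construct the $h_{n}$ together with a function $\tau$, defined on a suitable subgroup of $H$ containing all of them, which is unbounded along $(h_{n})$ and changes by at most a constant $C(\epsilon)$ under multiplication by elements of $U_{\epsilon}$ on either side, for $\epsilon$ small.

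For $\tau$ I would take a fractional Dehn twist / rotation number quasimorphism. Fix a tuple $\hat p$ of $\zeta(S)-1$ points and a non null-homotopic simple closed curve $\alpha$ in $S_{\hat p}$ with $im(\alpha)$ contained in some $O\in\O_{\hat p}(H)$, and choose $q\in im(\alpha)$. For every compact surface $S$ the surface $S_{\hat p\cup\{q\}}$ has negative Euler characteristic, hence admits a complete finite-area hyperbolic metric; fix such a metric, with universal cover $\mathbb H^{2}$ and deck group $\Gamma$. The whole point of the normalization $m=\zeta(S)$ is that it makes $\p\colon\pi_{1}(S_{\hat p})\to\MCG(S_{\hat p\cup\{q\}})$ injective with torsion-free image, so $\p([\alpha])$ has infinite order; and, writing $\partial_{1},\partial_{2}$ for the two boundary curves of an annular neighbourhood of $\alpha$ in $S_{\hat p}$ (they separate $q$), one has $\p([\alpha])=T_{\partial_{1}}T_{\partial_{2}}^{-1}$. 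I let $\tau$ be the ``twisting around $\partial_{1}$'' homogeneous quasimorphism on the stabilizer of $[\partial_{1}]$ in $\MCG(S_{\hat p\cup\{q\}})$: the difference of the rotation numbers induced at the two ends of the annular cover of $S_{\hat p\cup\{q\}}$ determined by $\langle[\partial_{1}]\rangle$, read off from the action on $\partial\mathbb H^{2}$ of the canonical lift of a representing homeomorphism. Since $T_{\partial_{1}}$ and $T_{\partial_{2}}$ commute, $\tau(\p([\alpha]))=\tau(T_{\partial_{1}})-\tau(T_{\partial_{2}})=1-0\neq 0$, so $|\tau(\p([\alpha])^{n})|\to\infty$. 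When $S$ is non-orientable or $\alpha$ is one-sided one makes routine modifications, but an analogous nonvanishing twist invariant persists.

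To build $(h_{n})$ inside $H$ I discretize the point-push along $\alpha$. Choose a sufficiently fine cyclic partition $q=q_{0},q_{1},\dots,q_{N}=q$ of $im(\alpha)$ and, using $im(\alpha)\subseteq O$, elements $a_{i}\in H_{\hat p}$ with $a_{i}(q_{i-1})=q_{i}$; put $g:=a_{N}\cdots a_{1}\in H_{\hat p\cup\{q\}}$ and $h_{n}:=g^{n}$. If the $a_{i}$ are small enough that (given the partition) each is isotopic to the identity inside $S_{\hat p}$, then $[g]$ lies in $\ker\big(\MCG(S_{\hat p\cup\{q\}})\to\MCG(S_{\hat p})\big)=\p(\pi_{1}(S_{\hat p}))$, and tracking the trajectory of $q$ through the partition identifies $[g]$ with $\p([\alpha])$; hence $[h_{n}]=\p([\alpha])^{n}$ and $\tau(h_{n})=n$. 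The delicate point here is precisely the ability to choose the $a_{i}$ small --- a micro-transitivity statement for the $H_{\hat p}$-action along $\alpha$ --- which is automatic if $H$ is Polish (Effros's theorem) and must otherwise be extracted from the Roelcke precompactness being assumed for contradiction, or established by a more hands-on construction.

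It remains to check coarse bi-invariance of $\tau$ and to address a bookkeeping issue. For $\epsilon$ less than the width of a collar of the geodesic representative of $\partial_{1}$, every $u\in U_{\epsilon}$ fixes $[\partial_{1}]$ and, being $C^{0}$-close to the identity, induces maps $C^{0}$-close to the identity at the ends of the annular cover, so $|\tau(u)|$ is small; hence $|\tau(uhv)-\tau(h)|\le|\tau(u)|+|\tau(v)|+2\,\mathrm{def}(\tau)\le C(\epsilon)$. The remaining point --- and the one I expect to require the most care --- is that $\tau$ is only defined on $H_{\hat p\cup\{q\}}$, whereas elements of $U_{\epsilon}\cap H$ need not fix $\hat p\cup\{q\}$; I would handle this by carrying out the entire argument inside the pointwise stabilizer $H_{\hat p\cup\{q\}}$, invoking that a stabilizer of a finite tuple in a Roelcke precompact group is again Roelcke precompact (standard for Polish groups; a short argument in general). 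Granting this reduction, the sequence $(h_{n})$ with $\tau(h_{n})=n$ and the inequality above give $h_{n}\notin (U_{\epsilon}\cap H_{\hat p\cup\{q\}})\,h_{m}\,(U_{\epsilon}\cap H_{\hat p\cup\{q\}})$ whenever $|n-m|>C(\epsilon)$; passing to a subsequence spaced more than $C(\epsilon)$ apart and applying the reduction of the first paragraph to $H_{\hat p\cup\{q\}}$ contradicts its Roelcke precompactness, hence that of $H$.
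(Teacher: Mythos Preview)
Your argument has a genuine gap precisely where you flag it: the micro-transitivity step. To identify $[g]$ with $\p([\alpha])$ you need each $a_i\in H_{\hat p}$ to be small enough to be isotopic to the identity in $S_{\hat p}$, but sufficient transitivity only hands you \emph{some} $a_i$ moving $q_{i-1}$ to $q_i$, with no size control. Effros's theorem does not rescue this: $H$ is not assumed Polish, and even if it were, Effros requires the orbit to be non-meagre in its closure, which does not follow from $im(\alpha)\subseteq O$. Nor does Roelcke precompactness supply micro-transitivity in any direct way. This is not a detail to be filled in later---the paper's Lemmas \ref{l: pushing around} and \ref{l: nesting} exist exactly to circumvent it. They never produce an $h\in H_{\bar p}$ with $[h]=\p([\alpha])$; instead they run a dichotomy on the sets $A_{q,n}=\{[wh]:h\in H_{\hat p},\ w\in U_{1/n}\cap G_{\hat p},\ wh(q)=q\}$: either these are infinite for all $n$ (giving infinitely many mapping classes outright), or they stabilise to a \emph{finite} subgroup $F_q$, and then a compactness argument along $\alpha$ forces $\p([\alpha])\in F_q$, contradicting torsion-freeness. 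Crucially the corrections $w$ lie in $G$, not in $H$.

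The reduction to the stabiliser $H_{\bar p}$ is a second gap. That open subgroups of Roelcke precompact groups are Roelcke precompact is standard, but point-stabilisers in $\G$ are closed and not open. For Polish groups one can pass through the $\omega$-categoricity correspondence, but $H$ is an arbitrary subgroup with the subspace topology; taking the closure $\bar H\leq G$ (which is Polish and Roelcke precompact) does not help, since $H_{\bar p}$ need not be dense in $\bar H_{\bar p}$. The paper avoids this issue entirely: it never restricts to $H_{\bar p}$, working instead with $f_n\in H_{\hat p}$ and auxiliary $b_n=w_n^{-1}f_n\in G_{\bar p}$, and then showing via a direct homotopy argument (Lemma \ref{l: non-linearity} together with the finiteness in Lemma \ref{l: finitely many rhos}) that Roelcke precompactness of $H$ itself would force $\{[b_n]\}$ to lie in a finite set---no quasimorphism, no passage to a stabiliser.
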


  \label{s: 2}
  \newcommand{\R}[0]{\mathbb{R}}
  \newcommand{\tup}[0]{p_{1},p_{2},\dots p_{m}}

  \newcommand{\pts}[0]{p_{1},p_{2},\dots p_{r}}
  \newcommand{\pmc}[1]{\MCG(S_{p_{1},\dots p_{r},#1})}
  
  \renewcommand{\H}[2]{H_{\hat{p},#1\to #2}}
  \newcommand{\HH}[2]{H_{\pts,#1\to #2}}

      \paragraph{\bf{Point pushing maps}} Fix some tuple $\hat{p}$ of points in $int(S)$. For any path $\alpha:J\to S_{\hat{p}}$ parametrized by an interval $J$ and any open $U\subseteq S_{\hat{p}}$ containing $im(\alpha)$ we choose a map $\p^{U}_{\alpha}\in\G$ supported in $U$ pushing $\alpha(0)$ to $\alpha(1)$ along $\alpha$. Very concretely: if $\alpha(0)\neq\alpha(1)$ and $\alpha$ is simple this just means a map supported in some open disk $U'\subseteq U$ which maps $\alpha(0)$ to $\alpha(1)$ and is supported in $U'$. If $\alpha=\alpha_{1}*\alpha_{2}\dots *\alpha_{q}$, where the $\alpha_{i}$ are simple paths, then
        we can take as $\p_{\alpha}^{U}=\p^{U}_{\alpha_{q}}\circ\p^{U}_{\alpha_{q-1}}\dots\circ\p^{U}_{\alpha_{1}}$.
        When $\nb{im(\alpha)}{\epsilon}\subseteq S_{\hat{p}}$ we use the expression $\p^{\epsilon}_{\alpha}$ to denote $\p^{\nb{im(\alpha)}{\epsilon}}_{\alpha}$, where $\nb{X}{\epsilon}=\{x\in S\,|\,d(x,X)<\epsilon\}$.
        
        We recall the following fundamental fact:
        \begin{fact}[Alexander lemma]
        \label{f: alexander} The mapping class group of a once-punctured disk is trivial.
        \end{fact}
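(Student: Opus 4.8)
The plan is to prove the statement by the \emph{Alexander trick} (radial coning toward the puncture). Write $D$ for the closed disk and $p\in int(D)$ for the puncture; a class in the group to be computed is represented by some homeomorphism $\phi$ of $D$ fixing $\partial D$ pointwise with $\phi(p)=p$, and I must show that $\phi$ is isotopic to the identity through homeomorphisms that fix $\partial D$ pointwise and fix $p$. Since a boundary-fixing homeomorphism of $D$ can carry any interior point to the center (push $p$ to $0$ along a path in $int(D)$ by a compactly supported isotopy), conjugation by such a map induces an isomorphism of the relevant mapping class groups. This lets me reduce to the model case where $D$ is the unit disk in $\R^{2}$ and $p=0$ is its center.

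With $p=0$ fixed, I would write down the coning isotopy explicitly. For $s\in(0,1]$ set
\[
\phi_{s}(x)=\begin{cases} s\,\phi(x/s) & |x|\leq s,\\ x & |x|\geq s,\end{cases}
\]
and put $\phi_{0}=\mathrm{id}$. The verification then breaks into three routine checks. First, each $\phi_{s}$ is a well-defined homeomorphism of $D$: the two branches agree on the circle $|x|=s$ because $\phi$ fixes $\partial D$, and $x\mapsto s\,\phi(x/s)$ maps the disk of radius $s$ homeomorphically onto itself. Second, each $\phi_{s}$ fixes $\partial D$ pointwise (it is the identity for $|x|\geq s$) and, crucially, fixes the puncture, since $\phi_{s}(0)=s\,\phi(0)=0$; this last equality is exactly what the normalization $p=0$ buys me. Third, $s\mapsto\phi_{s}$ is continuous for the uniform topology, including at $s=0$: on $|x|\geq s$ the difference $\phi_{s}(x)-x$ vanishes, while on $|x|\leq s$ one has $|\phi_{s}(x)-x|\leq |s\,\phi(x/s)|+|x|\leq 2s$, so $\phi_{s}\to\mathrm{id}$ uniformly as $s\to 0$.

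Having produced an isotopy from $\phi_{1}=\phi$ to $\phi_{0}=\mathrm{id}$ entirely within the subgroup of boundary-fixing homeomorphisms that fix $p$, I conclude that $[\phi]$ is trivial; this isotopy is in particular a homotopy fixing $p$, so the conclusion holds for whichever equivalence ($homotopy$ or $isotopy$) defines the mapping class. As $\phi$ was arbitrary, the mapping class group of the once-punctured disk is trivial. The only genuinely delicate point is the requirement that the isotopy keep the puncture fixed at every time $s$ — an unconstrained Alexander isotopy would drag the cone point around — and this is precisely why I cone toward the puncture rather than toward an arbitrary center: placing $p$ at the cone point forces $\phi_{s}(p)=p$ identically, and everything else is the standard Alexander argument.
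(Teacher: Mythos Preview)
Your proof is correct and is the standard Alexander-trick argument, adapted in the natural way to the once-punctured setting by coning toward the puncture so that the isotopy fixes $p$ throughout. The paper itself offers no proof of this fact: it is recorded there as a classical result (``We recall the following fundamental fact'') and used as a black box, so there is no argument in the paper to compare against beyond noting that what you wrote is precisely the proof one would find in the standard references.
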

        From it one can deduce the following (see Chapter $4$ of \cite{farb2011primer}):
        \begin{fact}
        \label{f: pushes}Let $\alpha,\beta$ be paths in $S_{\hat{p}}$ from a point $q$ to a point $q'$ and $U,V\subseteq S_{\hat{p}}$ open sets containing $im(\alpha)$ and $im(\beta)$ respectively.
        
        If $\alpha$ and $\beta$ are homotopic in $S_{\hat{p}}$ then
        $(\p_{\beta}^{V})^{-1}\p_{\alpha}^{U}$ and
        $\p_{\beta}^{V}(\p_{\alpha}^{U})^{-1}$
        represent the identity in
        $\MCG(S_{\hat{p},q})$ and $\MCG(S_{\hat{p},q'})$ respectively regardless of the choices made. If $\alpha,\beta$ are paths in $S_{\hat{p}}$  with  $\alpha(1)=\beta(0)$ and $U,V$ neighbourhoods of $im(\alpha)$ and $im(\beta)$ respectively, then
        $\p^{V}_{\beta}\circ\p^{U}_{\alpha}=\p^{U\cup V}_{\alpha*\beta}\circ h$ for some $h\in G_{\hat{p},\alpha(0)}$ trivial in $\MCG(S_{\hat{p},\alpha(0)})$.
        
        The induced map $\mathcal{P}:\pi_{1}(S_{\hat{p}},q)\to \MCG(S_{\hat{p},q})$ is a group homomorphism and an  a non-trivial group embedding of a torsion free group under the conditions described in definition \ref{d: embedding}.
        \end{fact}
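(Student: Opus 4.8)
The plan is to reduce every clause of the Fact to a single statement about pushing along one \emph{simple} path, where Fact \ref{f: alexander} applies directly, and then to bootstrap outward in the order: concatenation formula, homotopy invariance and homomorphism property, and finally injectivity. The unifying device is the isotopy extension theorem, which lets me realise any $\p^{U}_{\gamma}$ as the time-$1$ map of an ambient isotopy of $S$ supported in $U$, fixing $\hat p$ pointwise, starting at the identity and dragging $\gamma(0)$ along $\gamma$; conversely every admissible push arises this way, since by Fact \ref{f: alexander} a homeomorphism supported in a disk and fixing its boundary is isotopic to the identity rel boundary.

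The crux is an independence-of-choices statement for a single simple path: if $\gamma$ is a simple path from $a$ to $b$ in $S_{\hat p}$ and $\p_{\gamma},\p'_{\gamma}$ are two admissible pushes, supported in disks $D,D'$ containing $im(\gamma)$ and disjoint from $\hat p$, then $(\p'_{\gamma})^{-1}\p_{\gamma}$ is trivial in $\MCG(S_{\hat p,a})$ and $\p'_{\gamma}(\p_{\gamma})^{-1}$ is trivial in $\MCG(S_{\hat p,b})$. I would first use the change-of-coordinates principle to reduce to the case where both pushes are supported in a common disk $D$; then $g:=(\p'_{\gamma})^{-1}\p_{\gamma}$ is supported in $D$, fixes $\partial D$, and fixes $a$ (it carries $a\mapsto b\mapsto a$), so it represents a class in the mapping class group of the once-punctured disk $(D,a)$, which is trivial by Fact \ref{f: alexander}; extending the resulting isotopy by the identity shows $g$ is trivial in $\MCG(S_{\hat p,a})$, and the argument for $b$ is symmetric.

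Granting this, I would obtain the concatenation formula by decomposing $\alpha=\alpha_{1}*\cdots*\alpha_{k}$ and $\beta=\beta_{1}*\cdots*\beta_{l}$ into simple sub-paths, so that both $\p^{V}_{\beta}\circ\p^{U}_{\alpha}$ and $\p^{U\cup V}_{\alpha*\beta}$ are, by the very definition of the pushes, composites of simple pushes along the same ordered list $\alpha_{1},\dots,\alpha_{k},\beta_{1},\dots,\beta_{l}$, differing only in the supports and specific choices made for each factor. The element $h:=(\p^{U\cup V}_{\alpha*\beta})^{-1}\p^{V}_{\beta}\p^{U}_{\alpha}$ then fixes $\alpha(0)$, and unwinding the two products factor by factor exhibits it as a composite of discrepancy maps each trivial in a mapping class group by the independence statement, whence $h\in G_{\hat p,\alpha(0)}$ is trivial in $\MCG(S_{\hat p,\alpha(0)})$. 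The homotopy-invariance clause then follows: when $\alpha\simeq\beta$ rel endpoints the loop $\alpha*\bar\beta$ is null-homotopic, so the concatenation formula together with the fact that a push along a null-homotopic loop is trivial in $\MCG$ — again reducing, via the disk it bounds, to Fact \ref{f: alexander} — gives that $(\p^{V}_{\beta})^{-1}\p^{U}_{\alpha}$ is trivial in $\MCG(S_{\hat p,q})$ and $\p^{V}_{\beta}(\p^{U}_{\alpha})^{-1}$ is trivial in $\MCG(S_{\hat p,q'})$. In particular $\p$ descends to a well-defined map on $\pi_{1}(S_{\hat p},q)$, and the concatenation formula shows it is multiplicative, hence a homomorphism once the conventions relating the order of loop concatenation to composition of homeomorphisms are fixed (the same bookkeeping that makes the connecting map below a homomorphism).

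For the last clause I would identify $\p$ with the connecting homomorphism of the evaluation fibration
\[
Homeo_{\partial S,\hat p,q}(S)\hookrightarrow Homeo_{\partial S,\hat p}(S)\xrightarrow{\,ev_{q}\,}int(S_{\hat p}),\qquad ev_{q}(f)=f(q),
\]
which is a fibration by the isotopy extension theorem and whose fiber and total space have $\pi_{0}$ equal to $\MCG(S_{\hat p,q})$ and $\MCG(S_{\hat p})$. Its homotopy long exact sequence
\[
\pi_{1}\big(Homeo_{\partial S,\hat p}(S),id\big)\xrightarrow{(ev_{q})_{*}}\pi_{1}(S_{\hat p},q)\xrightarrow{\ \p\ }\MCG(S_{\hat p,q})\longrightarrow\MCG(S_{\hat p})
\]
exhibits $\p$ as a homomorphism with $\ker\p=\mathrm{im}\,(ev_{q})_{*}$, so that injectivity is equivalent to triviality of $(ev_{q})_{*}$. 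This is the main obstacle, and the only place where the numerology of Definition \ref{d: embedding} enters: for $\hat p$ the prescribed $(\zeta(S)-1)$-tuple one must rule out a loop of boundary- and $\hat p$-fixing homeomorphisms dragging $q$ around a homotopically nontrivial curve. I would verify this from the classical determination of the homotopy type of the identity component of $Homeo_{\partial S,\hat p}(S)$ (Hamstrom, Gramain; cf. Chapter $4$ of \cite{farb2011primer}): fixing the boundary pointwise already forces this component to be contractible whenever $S_{\hat p}$ is hyperbolic, so $(ev_{q})_{*}$ vanishes; the thresholds $4,3,2$ for the sphere, projective plane and torus are precisely the numbers of marked points after which $S_{\hat p}$ becomes hyperbolic, while the extra puncture for the disk is what first makes $\pi_{1}(S_{\hat p})$ nontrivial. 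Finally, for $m=\zeta(S)$ the group $\pi_{1}(S_{m-1})$ is either free of positive rank or the fundamental group of a closed surface of negative Euler characteristic, hence nontrivial and torsion-free, and injectivity transports these properties to the image, as claimed.
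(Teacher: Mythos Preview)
The paper does not supply a proof of this Fact; it is stated with a pointer to Chapter~4 of \cite{farb2011primer} as a consequence of the Alexander lemma. Your proposal is exactly the standard argument developed there---the Birman exact sequence arising from the evaluation fibration, with injectivity deduced from the Hamstrom--Gramain contractibility results---so it aligns with the paper's intended (cited) approach.

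One minor imprecision: in deducing homotopy invariance you appeal to ``a push along a null-homotopic loop is trivial, via the disk it bounds,'' but a null-homotopic loop need not be simple nor bound an embedded disk. The clean fix is either to lift the homotopy $\alpha\simeq\beta$ directly via isotopy extension to an isotopy $\p^{U}_{\alpha}\simeq\p^{V}_{\beta}$ rel $\{\hat p,q,q'\}$, or---more economically---to invert your order of exposition and identify $\p$ with the connecting homomorphism of the fibration first, after which well-definedness on homotopy classes and the homomorphism property are automatic and the earlier by-hand verifications become redundant.
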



        We will occasionally omit the superscript from $\mathcal{P}_{\alpha}$ in contexts where in light of the above the choice of neighbourhood and representative do not affect the resulting expression. 

        Given a curve $\alpha$, for any $0\leq s\leq t\leq 1$ let $\alpha_{s,t}$ be the restriction of $\alpha$ to $[s,t]$ reparametrized to the domain is $[0,1]$. If $s>t$ we let $\alpha_{s,t}=\alpha_{t,s}^{-1}$. 
         We extend the parametrization of a curve to $\R$ cyclically when needed. 
        
        We write $H_{p_{1},p_{2},\dots p_{r},p\to q}$
        to denote the collection of elements in $H_{p_{1},\dots p_{r}}$ sending $p$ to $q$.
        Since for any $g\in\G_{\hat{p},p\to q}$ the inner automorphism $f\mapsto f^{g}$ induces an isomorphism between $\MCG(S_{\hat{p},q})$ and $\MCG(S_{\hat{p},p})$ for which we will use the same expression $(-)^{g}$.

        \renewcommand{\pts}[0]{p_{1},p_{2},\dots p_{m-1}}
        \renewcommand{\pmc}[1]{\MCG(S_{p_{1},\dots p_{m},#1})}
        \newcommand{\hh}[2]{\H{\alpha(#1)}{\alpha(#2)}}
        
        \begin{lemma}
        	\label{l: pushing around} Let $\hat{p}=(p_{1},\dots p_{m-1})$ be an $(m-1)$-tuple of points in $int(S)$ and $\alpha$ an essential simple closed curve in $S_{\hat{p}}$ with $im(\alpha)\subseteq O\in\O_{\hat{p}}(H)$. Let $p_{m}=\alpha(0)$. Assume that to any point $q\in im(\alpha)$ we have assigned some subgroup $F_{q}\leq \MCG(S_{\hat{p},q})$ such that the following holds: 
        	\begin{enumerate}[(a)]
        		\item For any $q,q'\in Im(\alpha)$ and any $h\in H_{\hat{p},q\to q'}$ we have 
        		$F_{q'}^{h}=F_{q}$. \label{pa}
        		\item For any $t\in [0,1]$ there exists some $\delta_{t}>0$ such that for all $s\in (t-\delta_{t},t+\delta_{t})$ and sufficiently small $\epsilon$ the set $(\p_{\alpha_{s,t}}^{\epsilon})^{-1}H_{\hat{p},\alpha(t)\to\alpha(s)}$ maps to a subset of the set $F_{\alpha(t)}$ 
        	\end{enumerate}
        	Then there exists some $f\in F_{p_{m}}$ and some $h\in H_{p_{1},\dots p_{m}}$ such that $[h]=\p([\alpha])f\in \MCG(S_{p_{1},\dots p_{m}})$  .
        \end{lemma}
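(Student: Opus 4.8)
\emph{Proof sketch.} The idea is to realize the point-pushing of $p_{m}$ along $\alpha$, up to a factor in $F_{p_{m}}$, by an actual element of $H$: one subdivides $\alpha$ finely, replaces the elementary push along each edge by an element of $H$ that moves the two endpoints in the same way, and uses (a) and (b) to keep the accumulated error inside $F$.

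For each $t\in[0,1]$, (b) supplies $\delta_{t}>0$, and the intervals $(t-\delta_{t},t+\delta_{t})$ cover the compact set $[0,1]$; a Lebesgue-number argument then produces a partition $0=t_{0}<t_{1}<\dots<t_{N}=1$ fine enough that the elementary push from $\alpha(t_{j})$ to $\alpha(t_{j+1})$ lies within the scope of (b) for every $j$, routing an edge through a centre furnished by (b) if neither endpoint plays that role. Fix $\epsilon>0$ below $d(\hat{p},im(\alpha))$ and small enough that the finitely many ``sufficiently small $\epsilon$'' provisos in (b) hold, and set $a_{j}:=\p^{\epsilon}_{\alpha_{t_{j},t_{j+1}}}$; these are defined, supported in $N_{\epsilon}(im(\alpha))\subseteq S_{\hat{p}}$, and by the concatenation rule of Fact \ref{f: pushes} the product $A:=a_{N-1}\circ\dots\circ a_{0}$ fixes $p_{m}$ and represents $\p([\alpha])$ in $\MCG(S_{p_{1},\dots,p_{m}})$.

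Since $im(\alpha)$ sits in the single $H_{\hat{p}}$-orbit $O$, choose for each $j$ an element $g_{j}\in H_{\hat{p},\,\alpha(t_{j})\to\alpha(t_{j+1})}$ and put $h:=g_{N-1}\circ\dots\circ g_{0}$. Following $p_{m}=\alpha(t_{0})\mapsto\alpha(t_{1})\mapsto\dots\mapsto\alpha(t_{N})=p_{m}$ shows $h$ fixes $p_{m}$, so $h\in H_{p_{1},\dots,p_{m}}$. Write $H_{i}:=g_{i-1}\circ\dots\circ g_{0}$ and $A_{i}:=a_{i-1}\circ\dots\circ a_{0}$, both sending $p_{m}$ to $\alpha(t_{i})$, and let $K_{i}:=H_{i}^{-1}\circ A_{i}$, a homeomorphism fixing $p_{m}$. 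I would prove $[K_{i}]\in F_{p_{m}}$ for all $i$ by induction on $i$: one has $K_{0}=\mathrm{id}$, and $K_{i+1}=H_{i}^{-1}\circ\big(g_{i}^{-1}\circ a_{i}\big)\circ A_{i}=E_{i}^{H_{i}}\circ K_{i}$, where $E_{i}:=g_{i}^{-1}\circ a_{i}$ is the error of the $i$-th edge — it fixes $\alpha(t_{i})$, and $[E_{i}]\in F_{\alpha(t_{i})}$ by (b) — and $(-)^{H_{i}}$ is the isomorphism $\MCG(S_{\hat{p},\alpha(t_{i})})\to\MCG(S_{\hat{p},p_{m}})$ induced by $H_{i}\in H_{\hat{p},\,p_{m}\to\alpha(t_{i})}$. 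Since $H_{i}$ is a product of elements of $H$, hypothesis (a) gives $F_{\alpha(t_{i})}^{H_{i}}=F_{p_{m}}$, hence $[E_{i}^{H_{i}}]=[E_{i}]^{H_{i}}\in F_{p_{m}}$ and the induction closes. Taking $i=N$, from $K_{N}=h^{-1}\circ A$ we get $[h]=[A][K_{N}]^{-1}=\p([\alpha])f$ with $f:=[K_{N}]^{-1}\in F_{p_{m}}$, which is the desired conclusion.

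I expect the main obstacle to be the first step: ensuring that the subdivision is fine enough that \emph{every} elementary push really meets the hypotheses of (b) — which forces careful routing of edges through the centres that (b) provides and a simultaneous choice of all the metric data ($\epsilon$ below $d(\hat{p},im(\alpha))$, embeddedness of the subarcs $\alpha_{t_{j},t_{j+1}}$, a single small $\epsilon$ valid for all the finitely many edges) — together with bookkeeping the composition and conjugation conventions carefully in the induction, so that the running error is always transported between basepoints by an element of $H$, a partial product of the $g_{j}$'s, and therefore, by (a), never leaves $F$.
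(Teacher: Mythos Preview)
Your proposal is correct and follows essentially the same strategy as the paper: subdivide $[0,1]$ using the cover furnished by (b), replace each elementary push by an element of $H$ moving the same pair of points, and use (a) to transport the accumulated error back to $F_{p_m}$. Your explicit induction on the partial products $K_i=H_i^{-1}A_i$ is in fact a cleaner packaging of the algebraic manipulation the paper carries out; the paper's zig-zag through ``centres'' $t_i$ and intermediate points $q_i$ is exactly your ``routing an edge through a centre'' made explicit.

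One small wrinkle worth spelling out: when the centre supplied by (b) for the edge $[t_j,t_{j+1}]$ is the \emph{right} endpoint $t_{j+1}$, condition (b) only places the error $a_j g_j^{-1}$ (which fixes $\alpha(t_{j+1})$) inside $F_{\alpha(t_{j+1})}$, not your $E_j=g_j^{-1}a_j$ inside $F_{\alpha(t_j)}$. You then need one extra use of (a), via $E_j=(a_j g_j^{-1})^{g_j}$ and $g_j\in H_{\hat p,\alpha(t_j)\to\alpha(t_{j+1})}$, to conclude $[E_j]\in F_{\alpha(t_j)}$. This is precisely the sort of bookkeeping you flag in your final paragraph, and once noted the induction closes exactly as you wrote.
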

        \begin{proof}
          By compactness there is a finite tuple $0\leq t_{0}<t_{1}<\dots <t_{r}\leq 1$ such that $[0,1]\subseteq\bigcup_{i=0}^{r}(t_{i}-\delta_{i},t_{i}+\delta_{i})$. We may assume that this set is minimal for the property so that one can choose $q_{i}\in(t_{i-1},t_{i})$ for $1\leq i\leq r$ so that $q_{i}\in(t_{i-1},t_{i-1}+\delta_{i})\cap(t_{i}-\delta_{i},t_{i})$. Let also $q_{0}=0$ and $q_{r+1}=1$.
         \renewcommand{\p}[0]{\mathcal{P}^{\epsilon}}
        
         Pick $h_{0}\in H_{\hat{p},\alpha(t_{0})\to \alpha(0)}$,
        for each $0\leq i\leq r$ some $g_{i}\in\hh{t_{i}}{q_{i}}$ and for  $1\leq i\leq r+1$ some $h_{i}\in\hh{t_{i-1}}{q_{i}}$.
        Consider the group element $h=h_{r+1}g^{-1}_{r}\dots g_{1}^{-1} h_{1}g_{0}^{-1}$. One can check that $h\in H_{p_{1},p_{2},\dots p_{m}}$.
        
        By assumption we can write $g_{i}=\p_{\alpha_{t_{i},q_{i}}}c_{i}$ and $h_{i}=\p_{\alpha_{t_{i},q_{i+1}}}d_{i}$, where  $c_{i},d_{i}\in G_{\hat{p},\alpha(t_{i})}$ and $[c_{i}],[d_{i}]\in F_{\alpha(t_{i})}$ and any fixed $\epsilon<d(im(\alpha),\hat{p})$. 
        Alternatively, by property (\ref{pa}) we can write $g_{i}^{-1}=(\p_{\alpha_{t_{i},q_{i}}})^{-1}e_{i}$, where $e_{i}\in F_{q}$. 
        
         \renewcommand{\p}[0]{\mathcal{P}}
        Now we can write
         \begin{align*}
	         [h]=[h_{r+1}g^{-1}_{r}\dots g_{1}^{-1} \p_{\alpha_{t_{0},q_{1}}}d_{1} \underbracket{(\p_{\alpha_{t_{0},0}})^{-1}e_{0}}_{=g_{0}^{-1}\in H_{\hat{p},\alpha(0)\to\alpha(t_{0})}}]= \\
	         =[ h_{r+1}g^{-1}_{r}\dots  (\p_{\alpha_{t_{1} ,q_{1}}})^{-1}e_{1} \p_{\alpha_{t_{0},q_{1}}} (\p_{\alpha_{t_{0},0}})^{-1}e_{0}]\underbracket{[d_{1}^{g_{0}^{-1}}]}_{\in F_{\alpha(0)}}  \\
           =[h_{r+1}g^{-1}_{r}\dots  (\p_{\alpha_{t_{1} ,q_{1}}})^{-1} \p_{\alpha_{t_{0},q_{1}}}  (\p_{\alpha_{t_{0},0}})^{-1}[e_{0}][d_{1}^{g_{0}^{-1}}][e_{1}^{h_{1}g_{0}^{-1}} ]   \\
	         \dots = [\p_{\alpha_{t_{n},1}}(\p_{\alpha_{t_{n},q_{n}}})^{-1}\dots\p_{\alpha_{t_{0},q_{1}}}(\p_{\alpha_{t_{0},0}})^{-1}][h']
         \end{align*}   
              
         where $[h']$ is a product of elements in $F_{\alpha(0)}=F_{p_{m}}$ and thus itself in $F_{p_{m}}$. Notice that it is the same kind of manipulation that allows us to ignore the choice of particular point pushing maps in the expression above. In view of  \ref{f: pushes} we have $[h]\in\mathcal{P}([\alpha])F_{p_{m}}$.   
        \end{proof}
       
        \newcommand{\inn}[0]{\in\N}
        \renewcommand{\p}[0]{\mathcal{P}}
        \newcommand{\pp}[2]{\p_{\alpha_{{#1,#2}}}}
        
        \begin{lemma}
        \label{l: nesting}
        Let $\hat{p}=(p_{1},\dots p_{m-1})$, $\bar{p}=(\hat{p},p_{m})$, $H$ and $\alpha$ be as in Lemma \ref{l: pushing around}
        and assume that $m=\eta(S)$. Then given any $\delta>0$ there is a sequence $((f_{n},b_{n},w_{n}))_{n\in\N}$ of triples of elements of $G$ such that:
        \begin{enumerate}
        \item $f_{n}\in H_{\hat{p}}$ and  $w_{n}\in U_{\delta}\cap H_{\hat{p}}$
        \item $b_{n}=w_{n}^{-1}f_{n}\in G_{\bar{p}}$
        \item $\{[b_{n}]\}_{\inn}\subseteq \MCG(S_{\bar{p}})$ is an infinite set
        \end{enumerate}
        \end{lemma}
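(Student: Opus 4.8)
The hypothesis $m=\eta(S)$ places us in the situation of Definition~\ref{d: embedding}, so by Fact~\ref{f: pushes} the point pushing map $\mathcal{P}\colon\pi_1(S_{\hat p},p_m)\to\MCG(S_{\bar p})$ is an injective homomorphism onto a torsion-free subgroup; since $\alpha$ is essential, $[\alpha]\neq 1$, hence $\mathcal{P}([\alpha])$ has infinite order in $\MCG(S_{\bar p})$, and this is the source of the infinitely many mapping classes. I would first note that the statement can be trimmed: as $f_n,w_n\in H_{\hat p}$ one automatically has $b_n=w_n^{-1}f_n\in H_{\hat p}$, so the requirement $b_n\in G_{\bar p}$ just says $b_n\in H_{\bar p}$; conversely any $h\in H_{\bar p}$ is produced by the trivial factorisation $f_n=h$, $w_n=\mathrm{id}\in U_\delta\cap H_{\hat p}$. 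Thus $\delta$ imposes nothing real here and it suffices to show that $\{[h]\mid h\in H_{\bar p}\}\subseteq\MCG(S_{\bar p})$ is infinite.

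To do this I would feed into Lemma~\ref{l: pushing around}, besides $\alpha$ itself, all of its powers $\alpha^n$: the proof of that lemma uses the curve only through hypotheses~(a), (b) and the point-pushing calculus of Fact~\ref{f: pushes}, not through simplicity, so it applies verbatim to $\alpha^n$ as soon as (a) and (b) hold for $\alpha^n$. The point is that a single assignment $q\mapsto F_q$ ($q\in im(\alpha)$) serves for every $n$ at once: hypothesis~(a) sees only $im(\alpha^n)=im(\alpha)\subseteq O$ and is unchanged, while hypothesis~(b) for $\alpha^n$ reduces to hypothesis~(b) for $\alpha$, because for $s$ near $t$ the restriction $(\alpha^n)_{s,t}$ is (a reparametrisation of) a short restriction of $\alpha$ — this covers also the finitely many parameters at which $\alpha^n$ revisits $p_m$, using the cyclic extension of the parametrisation. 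Granting such a family, Lemma~\ref{l: pushing around} supplies, for each $n$, some $h_n\in H_{\bar p}$ and $f_n\in F_{p_m}$ with $[h_n]=\mathcal{P}([\alpha])^n f_n$ in $\MCG(S_{\bar p})$.

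Distinctness of the $[h_n]$ then holds as soon as $F_{p_m}$ meets the point-pushing subgroup trivially, i.e.\ $F_{p_m}\cap\mathcal{P}(\pi_1(S_{\hat p},p_m))=\{1\}$: if $[h_n]=[h_{n'}]$ then $\mathcal{P}([\alpha])^{\,n-n'}=f_{n'}f_n^{-1}$ lies in $F_{p_m}\cap\mathcal{P}(\pi_1(S_{\hat p},p_m))=\{1\}$, so $\mathcal{P}([\alpha])^{n-n'}=1$ and $n=n'$ by infinite order, whence $\{[h_n]\}_{n\in\N}$ is infinite and we are done (taking $b_n=f_n:=h_n$, $w_n:=\mathrm{id}$). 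The candidate family is the smallest one closed under $H_{\hat p}$-conjugation (forced by~(a)) and containing, for each $t$ and all $s$ near $t$, the classes of $(\mathcal{P}^{\epsilon}_{\alpha_{s,t}})^{-1}H_{\hat p,\alpha(t)\to\alpha(s)}$ (forced by~(b)); that (b) genuinely holds for it with the $\delta_t$ small is a compactness argument on $[0,1]$, using $im(\alpha)\subseteq O$ to supply local transporting homeomorphisms, and that $F_{p_m}$ avoids point-pushes is to be checked through the forgetful homomorphism $\MCG(S_{\bar p})\to\MCG(S_{\hat p})$ (with kernel $\mathcal{P}(\pi_1(S_{\hat p},p_m))$), under which each generating class $[(\mathcal{P}^{\epsilon}_{\alpha_{s,t}})^{-1}g]$ maps to $[g]$ since the small push $\mathcal{P}^{\epsilon}_{\alpha_{s,t}}$ dies there.

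The step I expect to be genuinely hard is exactly this last one: ruling out that the error subgroup $F_{p_m}$ swallows a power of $\mathcal{P}([\alpha])$. It is not unconditional — for instance, if $H_{\hat p}$ contains an element that becomes isotopic to the identity after forgetting the marked point but whose point-trace winds around $\alpha$, then $F_{p_m}$ will contain $\mathcal{P}([\alpha])$ and the coset argument collapses. I would deal with this by a dichotomy: either $[H_{\bar p}]$ already contains some nontrivial $\mathcal{P}([\gamma])\in\mathcal{P}(\pi_1(S_{\hat p},p_m))$, in which case — $\pi_1(S_{\hat p},p_m)$ being torsion-free — its powers $\mathcal{P}([\gamma])^n\in[H_{\bar p}]$ are already pairwise distinct and we finish at once; or it does not, and then one must verify that the minimal family above does satisfy $F_{p_m}\cap\mathcal{P}(\pi_1(S_{\hat p},p_m))=\{1\}$, which comes down to pinning down precisely which mapping classes the stabiliser cosets $H_{\hat p,\alpha(t)\to\alpha(s)}$ contribute locally along $\alpha$. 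This local analysis, making essential use of $im(\alpha)\subseteq O$, is where the real work sits.
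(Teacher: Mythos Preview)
Your reduction to showing $[H_{\bar p}]$ infinite exploits what is a typo in the stated lemma (the paper's own proof and its later use both take $w_n\in U_\delta\cap G_{\hat p}$, not $H_{\hat p}$), but grant it. The real gap is that your Case~B verification is impossible: any family $\{F_q\}$ of subgroups satisfying (a) and (b) \emph{necessarily} contains $\mathcal{P}([\alpha])$. Indeed, taking $s=t$ in (b) (with $\mathcal{P}^\epsilon_{\alpha_{t,t}}$ trivial) forces $[H_{\hat p,\alpha(t)}]\subseteq F_{\alpha(t)}$, so in particular $[H_{\bar p}]\subseteq F_{p_m}$; then a single application of Lemma~\ref{l: pushing around} to $\alpha$ with this family yields $h\in H_{\bar p}$ and $f\in F_{p_m}$ with $[h]=\mathcal{P}([\alpha])f$, hence $\mathcal{P}([\alpha])=[h]f^{-1}\in F_{p_m}$. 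Thus $F_{p_m}\cap\mathcal{P}(\pi_1(S_{\hat p},p_m))$ is never trivial, your coset argument for the distinctness of the $[h_n]$ collapses, and no amount of ``local analysis along $\alpha$'' can rescue it. Your appeal to the forgetful map $\MCG(S_{\bar p})\to\MCG(S_{\hat p})$ only computes the image of $F_{p_m}$, not the kernel of the restriction, so it does not help either. The dichotomy simply does not close.

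The paper's route is to embrace rather than avoid the inclusion $\mathcal{P}([\alpha])\in F_{p_m}$. It sets $A_{q,n}=\{[wh]: h\in H_{\hat p},\ w\in U_{1/n}\cap G_{\hat p},\ wh(q)=q\}$ and splits on whether $A_{q,n}$ is infinite for every $n$ (which yields the lemma directly) or eventually finite. In the second case $F_q:=\bigcap_n A_{q,n}$ is a \emph{finite} subgroup satisfying (a) and (b), and one application of Lemma~\ref{l: pushing around} then places the infinite-order element $\mathcal{P}([\alpha])$ inside the finite group $F_{p_m}$: contradiction. The idea you are missing is to impose an a~priori finiteness constraint on $F_q$, so that containing $\mathcal{P}([\alpha])$ becomes the contradiction rather than the obstacle.
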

        \begin{proof}

        Notice that any element supported in $B(p_{m},\frac{1}{n})$ belongs to $U_{\frac{2}{n}}$.
        
        For any $q\in im(\alpha)$ and any $n>0$ let 
        $$
         A_{q,n}=\{[wh]\,|\,h\in H_{\hat{p}},w\in U_{\frac{1}{n}}\cap G_{\hat{p}}, wh(q)=q\}\subseteq \MCG(S_{\hat{p}}) 
        $$
        We distinguish two mutually exclusive cases: 
        \begin{enumerate}
        	\item $A_{q,n}$ is infinite for all $n>0$ 
        	\item  $A_{q,n}$ is finite for some $n>0$    
        \end{enumerate}
        Continuity of inner automorphisms and transitivity of $H_{\hat{p}}$ on $im(\alpha)$ implies that the same case holds for any $q\in im(\alpha)$. The first clearly implies the condition of the statement is satisfied, so from now on assume the second is always the case. 
        
        Define $F_{q}=\bigcap_{n>0}A_{q,n}$. This is the intersection of a descending chain of non-empty sets  which are eventually finite. Hence, it is a finite non-empty subset of $\MCG(S_{\hat{p},q})$. Continuity of inner automorphisms implies that for any $q,q'\in im(\alpha)$ and any $h\in H_{\hat{p},q\to q'}$ conjugation by $h$ induces an isomorphisms between $F_{q'}$ and $F_{q}$.
        
        \begin{lemma}
         	\label{l: subgroup}For any $q\in im(\alpha)$ the set $F_{q}$ is closed under multiplication. It is therefore a finite subgroup of $\MCG(S_{\hat{p},q})$. 
        \end{lemma}
        \begin{subproof}
        	Consider any $\eta_{1},\eta_{2}\in F_{q}$ and $n>0$. Then pick some $h_{1}\in H_{\hat{p}}$ that can be written as
        	$h_{1}=u_{1}a_{1}$, where $u_{1}\in G_{\frac{1}{2n}}$ and $a_{1}\in G_{\hat{p},q}$ maps to $\eta_{1}$.
        	By continuity of the group operations there exists some $n'>0$ such that $(U_{\frac{1}{n'}})^{a^{-1}}\subseteq U_{\frac{1}{2n}}$. Choose $h_{2}\in H_{\hat{p}}$ of the form   
        	$u_{2}a_{2}$, where $u_{2}\in G_{\frac{1}{n'}}$ and $a_{2}\in G_{\hat{p},q}$ maps to $\eta_{2}$. Then 
        	$h_{1}h_{2}=u_{1}u_{2}^{a_{1}^{-1}}a_{1}a_{2}\in U_{\frac{1}{n}}a_{1}a_{2}$ and $\eta_{1}\eta_{2}\in A_{q,n}$. Since $n$ is arbitrary, we are done.       
        \end{subproof}
        
        We are now in the position to apply Lemma \ref{l: pushing around} to the family $\{F_{q}\}_{q\in im(\alpha)}$. 
         This yields some element $h\in H_{p_{1},\dots p_{m}}$ (so that $[h]\in F_{p_{m}}$) whose image in $\MCG(S_{p_{1},\dots p_{m}})$ is of the form $f\p(\alpha)$, where $f\in F_{p_{m}}$. Since $F_{p_{m}}$ is a group we get that  $\p(\alpha)\in F_{p_{m}}$, contradicting the finiteness of $F_{p_{m}}$ and the fact that by the choice of $m$ the map 
        $\p:\pi_{1}(S_{\hat{p}},p_{m})\to \MCG(S_{\hat{p},p_{m}})$ is an embedding of a torsion-free group.
        \end{proof}



        \section{Discriminating between mapping classes}
        
        Given $\alpha,\alpha'$ arcs between the same pair of boundary points of $\partial S$ or simple closed curves in $S$ and given some subset $F\subseteq S$  we write $\alpha\simeq_{F}\alpha'$ to indicate that
        $F$ does not intersect $im(\alpha)\cup im(\alpha')$ and $\alpha$ is homotopic to $\alpha'$ in $S\setminus F$ (rel $\partial I$ in the arc case).

        \begin{definition}
        \label{d: delta}Let $\delta_{e}$ be small enough that for any $x,y$ at distance less than $ \delta_{e}$ the point $y$ is in the domain of injectivity of the exponential map at $x$, so that there is a unique geodesic segment from $x$ to $y$ of minimal length in $B(x,\delta_{e})\cap B(y,\delta_{e})$, which we denote by $[x,y]$.
        \end{definition}
        
        As it is well known, given two maps $f,f'$ from a space $X$ into $S$ such that $d(f(x),f'(x))<\delta_{e}$ for any $x\in X$ a uniform parametrizataion of the geodesic $[f(x),f'(x)]$ yields a homotopy in $S$ between the two maps $f$ and $f'$.
        
        An immediate consequence of this is the following observation:
        \begin{observation}
        \label{o: homotopic to the identity}Let $\bar{p}=(p_{1},p_{2}\dots p_{m})$ a tuple of points in $int(S)$ and let $u\in U_{\delta_{e}}\cap G_{\bar{p}}$. Then $u$ represents the identity in $\MCG(S_{\bar{p}})$.
        \end{observation}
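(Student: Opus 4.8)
The plan is to write down an explicit homotopy from $u$ to the identity that is stationary on $p_1,\dots,p_m$, and then invoke the definition of $\MCG(S_{\bar p})$ recalled in the introduction (where $[g]=[h]$ precisely when $g$ and $h$ are homotopic through a homotopy fixing the punctures). First I would use that $u\in U_{\delta_e}$, so that $d(x,u(x))<\delta_e$ for every $x\in S$; by Definition \ref{d: delta} this makes the geodesic segment $[x,u(x)]$ unambiguously defined. A uniform parametrization of these segments is then, by the well-known fact stated immediately before the Observation (applied with $X=S$, $f=\mathrm{id}_S$, $f'=u$), a continuous homotopy $G_\bullet\colon S\times[0,1]\to S$ with $G_0=\mathrm{id}_S$ and $G_1=u$.

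It then remains to check that $G_\bullet$ is stationary where it needs to be. Since $u\in G_{\bar p}$ it fixes each $p_i$, and since $u\in G=Homeo_{\partial S}(S)$ it fixes $\partial S$ pointwise; for any such point $x$ the segment $[x,u(x)]=[x,x]$ is constant, so $G_t(x)=x$ for all $t$. Hence $G_\bullet$ is a homotopy from $\mathrm{id}_S$ to $u$ fixing the punctures (and the boundary) throughout, and therefore $[u]$ is the identity of $\MCG(S_{\bar p})$. I do not expect any real obstacle here: the only point requiring mild care is that $\delta_e$ was chosen in Definition \ref{d: delta} exactly so that $[x,u(x)]$ makes sense under the hypothesis $u\in U_{\delta_e}$, and continuity of $G_\bullet$ is precisely the content of the cited well-known fact, so nothing further is needed.
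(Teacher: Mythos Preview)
Your argument is correct and is exactly the argument the paper intends: the Observation is introduced as an ``immediate consequence'' of the preceding well-known fact about geodesic homotopies, and you have simply spelled out that consequence, including the verification that the geodesic homotopy is stationary on the punctures (and on $\partial S$) because $u$ fixes those points. Nothing is missing.
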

        
        Another one is the Lemma below, which for future reference we state in a degree of generality not needed here. Its content is rather standard but we include a proof for the sake of completeness.

        \begin{definition}
        We say that a path connected subset $U\subseteq X$ is $\epsilon$-branching
        if there exist $x_{1},x_{2},x_{3}\in U$ and paths $\gamma_{ij}$ between $x_{i}$ and $x_{j}$ for distinct $i,j$ such that $\{i,j,k\}=\{1,2,3\}$ the path $\gamma_{ij}$ does not contain points at distance $<\epsilon$ from $x_{k}$. Clearly, any path connected open set is $\epsilon$-branching for some $\epsilon$.
        \end{definition}

        \begin{lemma}
        \label{l: non-linearity} Let $D_{1},D_{2},\dots D_{m}\subseteq S$ be $\epsilon$-branching disjoint closed subdisks of $S$. Then there is some constant $\delta $ depending only on the metric such that the following holds. If $\alpha,\beta:I\to S$ are two loops in $S\setminus\bigcup^{m}_{i=1}D_{i}$ with common endpoints and $d(\alpha(t),\beta(t))<\min\{\delta_{e},\epsilon\}$ for all $t\in I$, then $\alpha_{\simeq_{\bigcup_{i=1}^{m}}D_{i}}\beta$.
        \end{lemma}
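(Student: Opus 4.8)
We will see that $\delta$ may be taken equal to $\delta_{e}$ from Definition~\ref{d: delta}; so assume $d(\alpha(t),\beta(t))<\min\{\delta_{e},\epsilon\}$ for all $t$. Parametrizing the geodesics $[\alpha(t),\beta(t)]$ uniformly produces a map $H\colon I\times I\to S$ with $H(\cdot,0)=\alpha$, $H(\cdot,1)=\beta$ and, since $\alpha$ and $\beta$ share their basepoint $b$, with $H(0,s)=H(1,s)=b$; thus $\alpha\simeq\beta$ rel $b$ in $S$ (the remark after Definition~\ref{d: delta}). The entire task is to replace $H$ by a homotopy with image in $S\setminus\bigcup_{i}D_{i}$. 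Note that $H$ is \emph{thin}: each fibre $H(\{t\}\times I)=[\alpha(t),\beta(t)]$ has length $<\min\{\delta_{e},\epsilon\}$, hence lies in a convex ball and is too short to contain two points of any $D_{i}$ at distance $\geq\epsilon$.

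To push $H$ off the disks, subdivide $I\times I$ into closed cells on each of which $H$ has image of diameter below a threshold $r>0$ (the subdivision may depend on $H$ and the $D_{i}$, which is harmless), where $r<\delta_{e}$, $r<\tfrac13 d(D_{j},D_{k})$ for $j\neq k$, and $r$ is small enough that each $N_{3r}(D_{i})$ is an embedded disk. Let $W_{i}$ be the union of those cells whose image meets $D_{i}$ (a compact orientable subsurface of $I\times I$, after a harmless adjustment at grid vertices). Then the $W_{i}$ are pairwise disjoint, lie in $\operatorname{int}(I\times I)$, satisfy $H(W_{i})\subseteq N_{r}(D_{i})\subseteq N_{3r}(D_{i})=:N_{i}$ and $H(\partial W_{i})\cap D_{i}=\varnothing$ (each edge of $\partial W_{i}$ lies in a $2$-cell whose $H$-image misses $D_{i}$), while $N_{i}$ is disjoint from $D_{k}$ for $k\neq i$. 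So $H$ restricts to a map of $W_{i}$ into the disk $N_{i}$ carrying $\partial W_{i}$ into the annulus $N_{i}\setminus D_{i}\simeq S^{1}$. Since $\pi_{2}(N_{i},N_{i}\setminus D_{i})\cong\mathbb Z$ while $\pi_{1}(N_{i},N_{i}\setminus D_{i})$ and all $\pi_{k}(N_{i},N_{i}\setminus D_{i})$ with $k\geq3$ vanish, obstruction theory shows that $H|_{W_{i}}$ is homotopic rel $\partial W_{i}$ into $N_{i}\setminus D_{i}$ exactly when, for every connected component $C$ of $W_{i}$, a single integer obstruction $o(C)\in H^{2}(C,\partial C;\mathbb Z)\cong\mathbb Z$ vanishes; here $o(C)$ is the degree with which $H|_{C}$ covers a generic point of $\mathring D_{i}$, equivalently the total winding number of $H|_{\partial C}$ around $D_{i}$ in $N_{i}$. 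Carrying out these homotopies over all the disjoint $W_{i}$ would move $H$ into $S\setminus\bigcup_{i}D_{i}$ while fixing $\partial(I\times I)$, completing the proof.

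Everything thus hinges on the vanishing of each $o(C)$; equivalently, the thin homotopy $H$ must not change the ``winding number around $D_{i}$'', and this is where $\epsilon$-branching is essential. Take the three pairwise $\epsilon$-separated points $x_{1},x_{2},x_{3}\in D_{i}$ from the definition and choose a generic $c\in\mathring D_{i}$ near $x_{1}$. Then $o(C)=\#_{\mathrm{alg}}\bigl(H^{-1}(c)\cap C\bigr)$, and $H^{-1}(c)$ is the graph over $T=\{t:c\in[\alpha(t),\beta(t)]\}$ of the parameter at which the geodesic from $\alpha(t)$ to $\beta(t)$ meets $c$. Since such a geodesic has length $<\epsilon$, for $t\in T$ it stays within $\epsilon$ of $x_{1}$, hence clear of $x_{2},x_{3}$, so near each $t\in T$ both $\alpha$ and $\beta$ cross the same small ``leg'' of $D_{i}$ near $x_{1}$ on the same side, and one deduces that the signed contributions to $o(C)$ over $T$ cancel. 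Turning this — the statement that \emph{a homotopy through arcs shorter than $\epsilon$ cannot slip around an $\epsilon$-branching disk} — into a rigorous argument, and handling the attendant bookkeeping over the components $C$ and the disks $D_{i}$, is the main obstacle. A mere lower bound on $\operatorname{diam}(D_{i})$ does not suffice, since the $D_{i}$ may be long and thin; this is precisely why the definition of $\epsilon$-branching invokes a tripod. The remaining ingredients — the geodesic homotopy, the reduction via subdivision, and the obstruction-theoretic computation — are routine.
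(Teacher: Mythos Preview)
Your overall architecture matches the paper's: form the geodesic homotopy $H$, and reduce the problem to showing that for each component $C$ of (a cellular approximation to) $H^{-1}(D_i)$ the restriction $H|_{\partial C}\colon\partial C\to\partial D_i$ is nullhomotopic, equivalently $o(C)=0$. Your obstruction-theoretic packaging of this reduction is correct and is a clean rephrasing of what the paper does by hand (showing $H|_{\partial C}$ is nullhomotopic and then re-extending over $C$).

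The gap is exactly where you flag it, and your heuristic does not close it. Knowing that for $t\in T$ the geodesic $[\alpha(t),\beta(t)]$ lies near $x_1$ and away from $x_2,x_3$ says nothing about the \emph{sign} of the contribution at $(t,s(t))$ and provides no pairing mechanism over $T$; in the non-branching thin-strip model the same description holds verbatim and the degree is $\pm1$. The sentence ``both $\alpha$ and $\beta$ cross the same small leg of $D_i$ near $x_1$ on the same side'' is not well-posed: $\alpha,\beta$ never enter $D_i$, and the geodesic between them meets $\partial D_i$ at two points that may lie on different arcs of the tripod. Computing the degree at a single point near $x_1$ simply does not engage all three branches at once, which is what is needed. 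The paper's way of using the branching is genuinely different: after arranging $\partial D_i=\gamma_{12}\cup\gamma_{13}\cup\gamma_{23}$, the curve $\partial C$ acquires a cyclic word in $\{1,2,3\}$ (vertices $H^{-1}(x_j)\cap\partial C$, edges the components of $H^{-1}(\gamma_{jk})\cap\partial C$), and thinness plus $\epsilon$-branching give the single combinatorial constraint that a vertex of colour $j$ and an interior point of an edge of colour $k\ell$ (with $\{j,k,\ell\}=\{1,2,3\}$) cannot occur over the same $t$-value, since that would force $d(x_j,\gamma_{k\ell})<\epsilon$. A short induction on the length of this cyclic word, repeatedly cancelling a pair of adjacent vertices at a local extremum of the $t$-projection, then shows the word is trivial in $\pi_1(\partial D_i)$. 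That inductive cancellation, not a local degree count at one point, is the missing idea.
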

        \begin{proof}

        
        For $1\leq i\leq m$ let the non-linearity of $D_{i}$ be witnessed by a triple $(x_{j}^{i})_{1\leq j\leq 3}$ of points in $D_{i}$ and paths $(\gamma_{jk}^{i})_{1\leq j<k\leq 3}$ in $D_{i}$ between the points in the triple.
        
        Let $H:I^{2}\to S$ be the homotopy between $\alpha$ and $\beta$ where
        $s\mapsto H(t,s)$ is a constant parametrization of the geodesic segment $[\alpha(t),\beta(t)]$.
        
        The following claim is easy to verify using rudiments of differential topology.
        \begin{claim}
        \label{cl: auxiliary}
        After performing a small perturbation of $\alpha$, $\beta$, $H$ and $\gamma^{i}_{jk}$
        we may assume that $\alpha,\beta$ and $H$ are differentiable, that $H$ is transverse to $\partial D_{i}$, $\gamma^{i}_{jk}$ and the points $x^{i}_{j}$, while preserving the property that $d(H(t,s_{1}),H(t,s_{2}))<\delta$ for any $t,s_{1},s_{2}\in I$.
        After modifying the $\gamma^{i}_{jk}$ further and shrinking $D_{i}$ we may additionally assume that $\partial D_{i}=\bigcup_{1\leq j<k\leq 3}\gamma^{i}_{jk}$. In particular $\gamma^{i}_{jk}$ meet only at the endpoints and they do so smoothly.
        
        \end{claim}

        \begin{lemma}
        For any connected component $C$ of $H^{-1}(D_{i})$ the map $H_{\restriction}:\partial C\to\partial D_{i}$ is nullhomotopic. \label{l: finiteness}
        \end{lemma}
        \begin{subproof}
        For convenience we will drop the superindex $i$ for the remainder of this proof.
        For $1\leq j,k\leq 3, j\neq k$ let $Y_{j}=H^{-1}(\{x_{j}\})$ and $Y_{jk}=H^{-1}(im(\gamma_{jk}))$.
        Let also $Y=\bigcup_{j=1}^{3}Y_{j}$.
        By transversality, we have the following:
        \begin{itemize}
        	\item $Y_{j}$ is finite for $1\leq j\leq 3$
        	\item $Y_{jk}$ is a union of finitely many arcs between points in $Y_{j}\cup Y_{k}$  (potentially arcs between two points in $Y_{j}$ or in $Y_{k}$)
        	\item $H^{-1}(\partial C)=\bigcup_{j\neq k}Y_{jk}$ is the image of a simple closed curve
        	\item $y\in Y_{j}$ is the endpoint of exactly one arc in $H^{-1}(\gamma_{kj})$ for $k\neq j$ (say $\gamma_{jk}=\gamma_{kj}$ ).
        \end{itemize}
        
        This makes  $\partial C$ into a simplicial complex with  $Y=\coprod_{j=1}^{3}Y_{j}$ and the components of $Y_{jk}$ as edges. Let $\pi$ be the projection of $I\times I$ onto the second coordinate.
        
        Notice that if $\{i,j,k\}=\{1,2,3\}$ and $x,y,z\in U$, then we cannot have $\pi(x)\in[\pi(y),\pi(z)]$ for
        $x\in Y_{i}$ and $[\pi(y),\pi(z)]\subseteq Y_{jk}$, since this would imply the existence of $w\in im(\gamma_{jk})$ such that $d(w,x_{i})<\delta$.
        
        We show that $H_{\restriction}:\partial C\to\partial D$ is null-homotopic by induction on the number of vertices in $Y$.
        
        Number the vertices of $Y$ cyclically as $y_{1},y_{2},\dots y_{N}$.
        Let $\tau_{i}=\pi(y_{i})$ and let $c_{i}\in\{1,2,3\}$ be such that $y_{i}\in Y_{c_{i}}$ and $d_{i}\in\{12,23,31\}$ such that $[y_{i},y_{i+1}]\subseteq Y_{d_{i}}$.

        Let $i$ be such that $\tau_{i}$ is minimal. Without loss of generality we may assume $i=2$ and $c_{2}=1$. The case $\{c_{1},c_{3}\}=\{2,3\}$ is excluded by our condition on $f$.
        
        Consider first the case in which $\{c_{1},c_{3}\}=\{1,k\}$, with $k\neq 1$.
        Without loss of generality we can assume that $k=2=c_{1}$. We cannot have
        $\tau_{1}\leq\tau_{3}$ since then $\tau_{1}\in [\tau_{2},\tau_{3}]$. But $c_{1}=2$ and $d_{1}=12$, which forces $d_{2}=31$, resulting in a collision. So $\tau_{1}>\tau_{3}$. We may assume $c_{1}=1$. Necessarily $d_{1}=13$
        which in turn implies $d_{0}=12$.
        
        At this point we can remove vertices $y_{1}$ and $y_{2}$ connect $y_{0}$ and $y_{3}$ directly with an edge of type $12$ (so compatible with the other incident edges at the endpoints). The non-collision condition continues to be satisfied, since the image of the new edge is contained in the image of two edges of the same type in the old configuration: $[\tau_{0},\tau_{3}]\subseteq[\tau_{0},\tau_{1}]\cup[\tau_{2},\tau_{3}]$, as $\tau_{1}\in(\tau_{2},\tau_{3})$.
        
        We have replaced a path of type $2_{12}1_{13}1_{12}2$ (resp $1_{12}1_{13}1_{12}2$) with one of type $2_{12}2$ (resp. $1_{12}2$), both of which represent homotopic paths between $x_{2}$ and itself (resp. $x_{1}$ and $x_{2}$) on the circle $\partial D$. Since the new configuration corresponds to a null-homotopic path by the induction hypothesis, so does the original.
        
        The only case left is that in which $\{c_{1},c_{3}\}=\{1\}$. We may assume $d_{1}=13$ and thus $d_{2}=12$, as well as $\tau_{1}<\tau_{3}$ and we can apply an argument identical to the one of the previous case.
        
        %
        \end{subproof}
        Given $H$ satisfying the conclusion of the sublemma, we can redefine $H$ on each component $C$ as above by extending $H_{\partial C}$ to a map from $C$ to $\partial D_{i}$. Postcomposing with a homeomorphism that slightly enlarges each $D_{i}$ while fixing $\alpha$ and $\beta$ yields a homotopy witnessing  $\alpha\simeq_{\bigcup_{i=1}^{m} D_{i}}\beta$.
        
        \end{proof}

        In the proof of the theorem below we will use the following (see \cite{farb2011primer}, Proposition 2.8)
  \begin{fact}
	  \label{f: action of mapping class group}The action of the homeomorphism group descends to a well-defined action of the mapping class group on free homotopy classes of curves in the surface and homotopy classes relative to the boundary of arcs between points in the boundary. The kernel of the action of the mapping class group on the union of the two families is trivial.	  
  \end{fact}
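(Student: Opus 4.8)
\emph{Plan.} The statement splits into a well-definedness assertion and a faithfulness assertion, which I would treat separately. The first is essentially formal: a single homeomorphism carries a free homotopy of loops to a free homotopy of loops and a homotopy of arcs rel endpoints to another such, so $\G$ acts on the two families of homotopy classes; and if $f$ and $g$ represent the same element of $\MCG(S)$, an isotopy $(H_{t})_{t\in[0,1]}$ from $f$ to $g$ fixes $\partial S$ and the marked points throughout, so $t\mapsto H_{t}\circ\gamma$ is a free homotopy from $f\circ\gamma$ to $g\circ\gamma$ and $t\mapsto H_{t}\circ\delta$ is a homotopy rel endpoints from $f\circ\delta$ to $g\circ\delta$; hence the action factors through $\MCG(S)$. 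I expect no difficulty here.

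For faithfulness I would run the \emph{Alexander method}. First I would fix a finite family $\Gamma$ consisting of essential simple closed curves together with essential simple arcs with endpoints on $\partial S$, in pairwise minimal position, such that: (i) cutting $S$ along $\bigcup\Gamma$ yields a disjoint union of disks, each meeting $\partial S$ in at most one arc and containing at most one marked point; and (ii) $\bigcup\Gamma$ is connected and its incidence and self-intersection pattern determines an orientation of each of its components (for instance each closed curve of $\Gamma$ either crosses another component exactly once, or ends on $\partial S$, or bounds a complementary disk carrying a marked point). Now, given $\phi$ in the kernel, the plan is: isotope a representing homeomorphism so that it successively realizes the (fixed) isotopy class of each member of $\Gamma$, using minimal position together with (ii) to conclude that the resulting $f$ fixes every member of $\Gamma$ setwise and with its orientation; isotope $f$ further to the identity on a regular neighbourhood of $\bigcup\Gamma$; and finally, on each complementary piece --- a disk, possibly carrying one marked point and possibly meeting $\partial S$ in part of its boundary --- apply the Alexander lemma (Fact~\ref{f: alexander}) to isotope $f$, rel the whole boundary of the piece and rel the marked point when present, to the identity. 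These disk isotopies agree on $\bigcup\Gamma$ and so glue to an isotopy of $f$ to $\mathrm{id}_{S}$ rel $\partial S$ and the marked points, i.e.\ $\phi=1$.

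The main obstacle is the combinatorial bookkeeping in the middle step: keeping the members of $\Gamma$ in minimal position through the successive isotopies and, above all, excluding that the resulting $f$ reverses the orientation of some component of $\Gamma$ or permutes the complementary disks --- which is precisely the role of condition (ii), and also the reason one must set aside the low-complexity surfaces (spheres and tori with too few marked points, and the closed genus-two surface); in the present setting these are excluded automatically, since the number of marked points in play is at least $\zeta(S)$. Beyond this I would only need the (standard, change-of-coordinates-type) existence of a family $\Gamma$ with properties (i) and (ii) on every admissible surface, noting that when $\partial S=\emptyset$ there are no arcs and $\Gamma$ is made of closed curves alone.
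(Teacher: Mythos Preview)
The paper does not supply a proof of this statement: it is recorded as a \emph{Fact} with a bare citation to \cite{farb2011primer}, Proposition~2.8. Your outline is exactly the Alexander method argument that reference carries out, so there is nothing to compare---you have reconstructed the cited proof rather than diverged from the paper.

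Two minor remarks on the sketch itself. First, your handling of the low-complexity exceptions is correct in spirit but slightly misplaced: the Fact as \emph{stated} in the paper makes no hypothesis on the surface, so strictly speaking the exceptional cases are not excluded by the statement; it is only in the paper's sole \emph{application} of the Fact (to $S_{\bar p}$ with $|\bar p|=\zeta(S)$) that the surface is guaranteed to have enough complexity for the Alexander method to go through unmodified. Second, in condition~(ii) you want the incidence pattern of $\Gamma$ to rule out nontrivial \emph{symmetries} of the configuration (so that $f$ cannot permute the members of $\Gamma$ or the complementary disks), which is a bit stronger than what you wrote; Farb--Margalit phrase this as requiring that no nontrivial mapping class of the cut-open surface permutes the pieces, and they verify it case by case. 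Neither point is a gap---just places where the write-up would need a sentence more of care.
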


        \main*
        
        \begin{proof}
        
        \newcommand{\e}[0]{4\delta_{0}}
        \newcommand{\ee}[0]{2\delta_{0}}
        \newcommand{\eee}[0]{\delta_{0}}
        
        Let $m=\zeta(S)$. Let $\hat{p}=(p_{1},\dots p_{m-1})$ and $\bar{p}=(\hat{p},p_{m})$ be tuples of distinct points as in Lemma \ref{l: nesting}.
        $$
        \delta_{0}= \frac{1}{7}\min\{\delta_{e},\min\{d(p_{i},\partial S),d(p_{i},p_{j})\,|\,1\leq i<j<m\}\}.
        $$
        where $\delta_{e}$ is as in definition \ref{d: embedding}
        
        Roelcke precompactness of $H$ implies the existence of a finite collection $h_{1},\dots h_{M}\in H$ such that
        $H\subseteq\bigcup_{j=1}^{M}U_{\eee}h_{j}U_{\eee}$.
        

        Let $f_{n}\in H_{p_{1},p_{2},\dots p_{m-1}}$, $b_{n}\in G_{\bar{p}}$, $w_{n}\in U_{\delta_{1}}\cap G_{\hat{p}}$ for $n\in\N$ be as given by Lemma \ref{l: nesting} applied to $H$ with constant $\delta=\delta_{0}$. In view of the above, for any $n$ there exists $j_{n}\in\{1,2,\dots M\}$ and $u_{n},v_{n}\in U_{\eee}$ such that $u_{n}f_{n}=h_{j_{n}}v_{n}$.
        Notice that for any $1\leq i\leq m$ we have $v_{n}(p_{i})\in B(p_{i},\eee)\subseteq v_{n}(B(p_{i},\ee))$ (for the last inclusion, notice that $v_{n}^{-1}\in U_{\delta_{0}}$).
        If follows that
        $$
        u_{n}f_{n}p_{i}=h_{j_{n}}v_{n}p_{i}\in h_{j_{_{n}}}(B(p_{i},\eee))\subseteq h_{j_{n}}v_{j_{n}}B(p_{i},\ee)=u_{n}f_{n}B(p_{i},\ee)
        $$
        
        
        Up to passing to a subsequence again we may assume that $j_{n}$ equals some constant $j_{0}$ for all $n$.
        For $1\leq i\leq m$ let $K_{i}=h_{j_{0}}\bar{B}(p_{i},\eee)$.
        
        \newcommand{\rin}[0]{r_{n}}    
        \newcommand{\rinn}[0]{r_{n'}}
        \newcommand{\qin}[0]{q^{i}_{n}}
        \newcommand{\qinn}[0]{q^{i}_{n'}}

        For $n>0$ write $q_{n}^{i}=u_{n}f_{n}p_{i}$. We know that $d(f_{n}p_{i},p_{i})<\delta\leq\eee$ and that $d(f_{n}p_{i},q_{n}^{i})<\eee$ ($w_{n}\in U_{\delta}$) so that $d(q_{n}^{i},p_{m}^{i})<\ee$.
        We choose a series of paths between these points as follows. For $n\inn$, $1\leq i\leq m$ take any path $\nu_{n}^{i}$ in $B(p_{m},\ee)$ from $p_{i}$ to $\qin$.

        \begin{lemma}
        \label{l: finitely many rhos}After passing to a subsequence we can choose for $1\leq i\leq m$ and $n,n'\geq 0$ paths $\mu_{n',n}^{i}$ from $q_{n}^{i}$ to $q_{n'}^{i}$ and $\epsilon\in (0,\delta_{0})$  with the following properties.
        \begin{itemize}
         	\item $N_{\epsilon}(im(\mu_{n',n}^{i}))\subseteq h_{j_{0}}(B(p_{i},\eee))\setminus\{q_{n}^{i}\,|\,n\inn,1\leq l\leq m, l\neq i\}$
        	\item Neither of the points $q_{n'}^{j}$ or $q_{n}^{j}$ belongs to $W_{n',n}^{i}$ for $1\leq j\leq m$, $j\neq m$.
        	
        	\item Let
        	$$P_{n',n}^{i}=\prod_{1\leq l<i}\p^{\delta_{0}}_{\nu^{l}_{n'}}\prod_{i<l\leq m}\p^{\delta_{0}}_{\nu^{l}_{n}}\in G$$
        	and notice that it fixes the ball $B(p_{i},\delta_{0})$. Consider also the loops at $p_{i}$:
        	\begin{align*}
        		\sigma_{n',n}^{i}:=&\nu_{n'}^{i}*\mu_{n',n}^{i}*(\nu_{n}^{i})^{-1} \\
        		\rho_{n',n}^{i}:=&(P_{n',n}^{i})^{-1}\circ\sigma_{n',n}^{i}=\nu_{n'}^{i}*((P^{i}_{n',n})^{-1}\circ\mu_{n',n}^{i})*(\nu_{n}^{i})^{-1}
        	\end{align*}
        	Then for any $1\leq i\leq m$ the image $\mathcal{R}^{i}$ of the set $\{\rho_{n',n}^{i}\}_{n,n'\in\N}$ in $\pi_{1}(S_{\bar{p}\setminus\{p_{i}\}},p_{i})$ is finite.
        \end{itemize}

        \end{lemma}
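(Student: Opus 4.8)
The plan is to choose the connecting arcs $\mu^{i}_{n',n}$ so that, after finitely many passages to subsequences, each $\rho^{i}_{n',n}$ is homotopic in $S_{\bar p\setminus\{p_{i}\}}$ to a loop depending on only finitely much data — most of them in fact becoming null-homotopic. The geometric facts I would lean on are: by the choice of $\delta_{0}$ the balls $B(p_{l},3\delta_{0})$ are pairwise disjoint and disjoint from $\partial S$, the sets $h_{j_{0}}(B(p_{l},\delta_{0}))$ are pairwise disjoint open topological disks, for every $n$ the point $q^{i}_{n}=u_{n}f_{n}p_{i}=h_{j_{0}}v_{n}p_{i}$ lies in $h_{j_{0}}(B(p_{i},\delta_{0}))\cap\overline{B(p_{i},2\delta_{0})}$, and $B(p_{i},3\delta_{0})$ is simply connected and contains no puncture of $S_{\bar p\setminus\{p_{i}\}}$. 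The key uniform observation is that, whatever $n,n'$ are, every point-pushing factor occurring in $P^{i}_{n',n}$ is supported inside $\bigcup_{l\neq i}B(p_{l},3\delta_{0})$, a region disjoint from $\overline{B(p_{i},2\delta_{0})}$; hence $P^{i}_{n',n}$ fixes $p_{i}$, fixes every $q^{i}_{n}$, and fixes a fixed small ball around any accumulation point of the sequence $(q^{i}_{n})_{n}$.

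Passing to a subsequence (diagonally in $i$) I would arrange $q^{i}_{n}\to q^{i}_{\infty}$ for each $i$; then $q^{i}_{\infty}\in\overline{B(p_{i},2\delta_{0})}$ is not a puncture, and — discarding the trivial case in which the $q^{i}_{n}$ are eventually constant — I may assume $q^{i}_{\infty}$ lies in the interior of $h_{j_{0}}(B(p_{i},\delta_{0}))$. Using the freedom in the choice of the $\nu^{l}_{n}$, I would take them so that, for all but finitely many $n$, $\nu^{l}_{n}$ agrees with a fixed path $\nu^{l}_{\infty}$ from $p_{l}$ to $q^{l}_{\infty}$ (inside $B(p_{l},2\delta_{0})$) except inside a small ball around $q^{l}_{\infty}$, where it is a short geodesic to $q^{l}_{n}$; correspondingly $\mathcal{P}^{\delta_{0}}_{\nu^{l}_{n}}$ may be chosen to equal $\mathcal{P}^{\delta_{0}}_{\nu^{l}_{\infty}}$ up to a homeomorphism supported in a small ball around $q^{l}_{\infty}$ and up to corrections trivial in $\MCG(S_{\bar p})$ — hence acting trivially on $\pi_{1}(S_{\bar p\setminus\{p_{i}\}},p_{i})$, being isotopies rel $\bar p$ — exactly as in the manipulations in the proof of Lemma~\ref{l: pushing around}, via Fact~\ref{f: pushes}. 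Since $q^{l}_{\infty}$ is interior to $h_{j_{0}}(B(p_{l},\delta_{0}))$, disjoint from $h_{j_{0}}(B(p_{i},\delta_{0}))$ for $i\neq l$, I may shrink those balls so that they miss $h_{j_{0}}(B(p_{i},\delta_{0}))$ for all $i\neq l$ and miss the finitely many auxiliary arcs produced below. Then for each $i$ and $n$ fix an arc $\lambda^{i}_{n}$ from $q^{i}_{\infty}$ to $q^{i}_{n}$ inside $h_{j_{0}}(B(p_{i},\delta_{0}))$ avoiding every $q^{l}_{n''}$ and accumulation point $q^{l}_{\infty}$ with $l\neq i$ and every puncture, contained in a fixed small ball $B(q^{i}_{\infty},\epsilon_{0})$ for all but finitely many $n$, and set $\mu^{i}_{n',n}:=(\lambda^{i}_{n'})^{-1}*\lambda^{i}_{n}$. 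Finally pick $\epsilon\in(0,\delta_{0})$ small enough that $N_{\epsilon}$ of each of the finitely many ``long'' $\lambda^{i}_{n}$, and of $B(q^{i}_{\infty},\epsilon_{0})$, stays inside $h_{j_{0}}(B(p_{i},\delta_{0}))$. This yields the first two bullet points, the statements about the points $q^{l}_{n''}$ ($l\neq i$) being automatic from the pairwise disjointness of the disks.

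For the last bullet point I would split the pairs $(n',n)$ according to which of $n,n'$ lies in the finite exceptional set. If neither does, $\mu^{i}_{n',n}\subseteq B(q^{i}_{\infty},\epsilon_{0})$ is fixed by $P^{i}_{n',n}$, so $\rho^{i}_{n',n}=\sigma^{i}_{n',n}=\nu^{i}_{n'}*\mu^{i}_{n',n}*(\nu^{i}_{n})^{-1}$; substituting the chosen forms of $\nu^{i}_{n'},\nu^{i}_{n}$ and contracting the resulting short loops inside the simply connected, puncture-free ball $B(p_{i},3\delta_{0})$ shows $\rho^{i}_{n',n}$ is null-homotopic. If $n$ is exceptional and $n'$ is not, then $P^{i}_{n',n}$ fixes $\lambda^{i}_{n'}\subseteq B(q^{i}_{\infty},\epsilon_{0})$ and, as applied to $\lambda^{i}_{n}$, agrees up to homotopy with the homeomorphism $P^{i}_{\infty,n}$ obtained by replacing each $\nu^{l}_{n'}$ ($l<i$) by $\nu^{l}_{\infty}$ — the difference being supported in the balls around the $q^{l}_{\infty}$ ($l<i$), which miss $h_{j_{0}}(B(p_{i},\delta_{0}))$ and hence $\lambda^{i}_{n}$, together with $\MCG(S_{\bar p})$-trivial corrections — so $\rho^{i}_{n',n}\simeq\nu^{i}_{\infty}*\big((P^{i}_{\infty,n})^{-1}\lambda^{i}_{n}\big)*(\nu^{i}_{n})^{-1}$, which depends on $n$ only. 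The case $n$ non-exceptional, $n'$ exceptional is symmetric, and the finitely many pairs with both exceptional contribute only finitely many classes. Hence only finitely many homotopy classes occur and $\mathcal{R}^{i}$ is finite.

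The step I expect to be the real obstacle is controlling the action of the point-pushing homeomorphisms $P^{i}_{n',n}$ on arcs lying in $h_{j_{0}}(B(p_{i},\delta_{0}))$: this set is only a topological disk and may a priori be a large, wildly shaped region that meets the supports $B(p_{l},3\delta_{0})$ of the point-pushing factors, so one cannot simply declare those factors to fix the $\lambda^{i}_{n}$. The whole point of the nested passages to subsequences and of the choices of $\nu^{l}_{\infty}$, the small radii, $\lambda^{i}_{n}$ and $\epsilon$ above is to confine every ``moving'' part of the construction to small balls sitting inside the pairwise disjoint disks $h_{j_{0}}(B(p_{l},\delta_{0}))$; the remaining work is the careful bookkeeping of the order in which the subsequences are taken and the radii are shrunk, and the routine verification — of the sort carried out in the proof of Lemma~\ref{l: pushing around} — that the $\MCG(S_{\bar p})$-trivial corrections do not affect the homotopy classes in question.
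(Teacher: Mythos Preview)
Your approach is viable and leads to a correct proof, but it is genuinely different from the paper's. The paper does not re-choose the $\nu^{l}_{n}$ or exploit convergence of the push maps; instead it builds each $\mu^{i}_{n',n}$ as $\beta*\lambda*\gamma$ where $\lambda$ comes from a fixed \emph{finite} family $\mathcal{L}$ of arcs in $h_{j_{0}}(B(p_{i},\delta_{0}))\setminus\bigcup_{l\neq i}\mathcal{Q}^{l}$ and the short tails $\beta,\gamma$ lie in $B(p_{i},3\delta_{0})$ (an elementary compactness observation). This immediately gives finiteness of the $\sigma^{i}$ classes. For the passage from $\sigma^{i}$ to $\rho^{i}$ the paper uses a separate principle (its Observation~\ref{o: perturbing a curve}): the homotopy class of a loop after a point-push depends only on the connected component of the complement of the loop containing the point being pushed; since each $\lambda\in\mathcal{L}$ meets suitable disks $F_{l}\supset B(p_{l},2\delta_{0})$ in finitely many arcs, there are only finitely many such components, hence finitely many $\rho^{i}$. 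Your hub-and-spoke construction instead forces most $\rho^{i}_{n',n}$ to be outright null-homotopic and handles the remaining ones by factoring $\mathcal{P}^{\delta_{0}}_{\nu^{l}_{n'}}$ as a fixed push followed by a push with small support --- this is sharper and avoids the connected-component observation, but it relies on the freedom to re-choose $\nu^{l}_{n}$ and the push representatives (harmless for the application, though strictly speaking the $\nu^{l}_{n}$ are fixed before the lemma), and it needs one small patch: your inference that $q^{i}_{\infty}$ lies in the \emph{interior} of $h_{j_{0}}(B(p_{i},\delta_{0}))$ does not follow from the sequence being non-constant --- either enlarge $\delta_{0}$ slightly for this one disk, or place the hub at a nearby interior point and route the $\lambda^{i}_{n}$ through it.
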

        
        \begin{subproof}
          After passing to a subsequence we may assume that the points $q_{n}^{i}$ converge to a point $q_{\infty}^{i}$ for all $1\leq i\leq m$. If we let  $\mathcal{Q}^{i}=\{q_{n}^{i},q_{\infty}^{i}\,|\,n\in\N\}$,
        then each of the sets
        $h_{j_{0}}(B(p_{i}))\setminus\bigcup_{l\neq i}\mathcal{Q}^{i}$ are connected.
        
        We start with the following observation:
        \begin{observation}
        	\label{o: finitely many}	Let $W$ be the interior of an embedded disk, $x\in S$, $U,U'$ two open sets in $S$ containing $x$ with $\bar{U'}\subseteq U$ and $\mathcal{C}$ some countable closed sets of points. Then there exits some finite family $\mathcal{L}$ of arcs between points $W\setminus\mathcal{C}$ such that for any two points $x,y\in (W\cap U')\setminus\mathcal{C}$ there exists some $\lambda_{x,y}\in\mathcal{L}$ and arcs
        	$\beta_{x,y}$ and $\gamma_{x,y}$ in $(W\cap U)\setminus\mathcal{C}$ such that
        	$$\beta_{x,y}(0)=x,\,\,\beta_{x,y}(1)=\lambda_{x,y}(0),\,\, \lambda_{x,y}(1)=\gamma_{x,y}(0),\,\,\gamma_{x,y}(1)=y.$$
        \end{observation}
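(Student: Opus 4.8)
The plan is to prove this by a finite‑cover argument; the only points needing care are that a countable closed set does not (locally) disconnect a surface, and that the cover can be chosen to meet the disk $W$ in connected pieces. Two elementary facts drive everything: since $\mathcal{C}$ is countable and closed it is nowhere dense in the surface $S$, so \textbf{(i)} every nonempty open subset of $S$ meets the complement of $\mathcal{C}$; and \textbf{(ii)} if $V\subseteq S$ is open with $V\cap W$ connected, then $(V\cap W)\setminus\mathcal{C}$ is path‑connected — given two of its points, join them by a path in the connected open surface $V\cap W$, cover that path by finitely many Euclidean coordinate balls, and inside each ball swap the relevant sub‑path for one of the uncountably many arcs of the pencil of circles through its two endpoints (pairwise meeting only at the endpoints), only countably many of which can hit $\mathcal{C}$. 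I will also assume, as is the case in the application, that $W\cap U$ is connected; by \textbf{(ii)} this makes $(W\cap U)\setminus\mathcal{C}$ path‑connected.

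First I would build the finite family. The set $\overline{U'}\cap\overline{W}$ is compact and sits inside the open set $U$, so I can cover it by finitely many open sets $V_{1},\dots,V_{k}$, each contained in $U$ and each meeting $W$ in a connected (possibly empty) set: near a point of $W$ take $V_{l}$ a small ball inside $W\cap U$; near a point of $\partial W$ take $V_{l}$ a coordinate ball of $S$ in which $\overline{W}\cap V_{l}$ is a closed half‑plane (legitimate because the embedded circle $\partial W$ is locally flat in the surface $S$), so $V_{l}\cap W$ is an open half‑plane, in particular connected; near a point of $\overline{U'}\setminus\overline{W}$ take $V_{l}$ disjoint from $\overline{W}$ and drop it. For each surviving index $l$ pick, using \textbf{(i)}, a basepoint $y_{l}\in(V_{l}\cap W)\setminus\mathcal{C}$, and let $\mathcal{L}$ be the finite set containing, for every ordered pair $(a,b)$ of surviving indices, one simple arc from $y_{a}$ to $y_{b}$ lying in $(W\cap U)\setminus\mathcal{C}$ (such an arc exists by path‑connectedness of that set). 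These are arcs between points of $W\setminus\mathcal{C}$, and $\mathcal{L}$ is finite.

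Now, given $x,y\in(W\cap U')\setminus\mathcal{C}$, choose indices $l,l'$ with $x\in V_{l}$ and $y\in V_{l'}$; both are surviving indices since $V_{l}\cap W\ni x$ and $V_{l'}\cap W\ni y$. By \textbf{(ii)} applied to $V_{l}$ and $V_{l'}$, choose a simple arc $\beta_{x,y}$ from $x$ to $y_{l}$ inside $(V_{l}\cap W)\setminus\mathcal{C}\subseteq(W\cap U)\setminus\mathcal{C}$ and a simple arc $\gamma_{x,y}$ from $y_{l'}$ to $y$ inside $(V_{l'}\cap W)\setminus\mathcal{C}\subseteq(W\cap U)\setminus\mathcal{C}$. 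Taking $\lambda_{x,y}\in\mathcal{L}$ to be the chosen arc from $y_{l}$ to $y_{l'}$, the concatenation $\beta_{x,y}*\lambda_{x,y}*\gamma_{x,y}$ matches endpoints exactly as required and lies in $(W\cap U)\setminus\mathcal{C}$. (That this concatenation is a genuine path from $x$ to $y$ uses once more that $W\cap U$ is connected; this is the only place that hypothesis enters.)

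The main — essentially the only — obstacle is the point‑set bookkeeping of the first two paragraphs: being careful that removing the countable closed set $\mathcal{C}$ destroys neither connectedness nor local path‑connectedness of the relevant open subsets of $S$, and arranging the finite cover so that each $V_{l}\cap W$ is connected, which is the one spot where the embeddedness of the disk $W$ (through local flatness of $\partial W$) is used. After that the argument is pure bookkeeping with a finite cover.
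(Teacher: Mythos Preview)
Your argument is correct and follows the same route as the paper's: cover $\bar{U'}\cap\bar{W}$ by finitely many connected pieces inside $U$, pick a basepoint in each piece avoiding $\mathcal{C}$, and let $\mathcal{L}$ consist of one arc between each pair of basepoints. The only difference is that the paper takes the pieces to be the (finitely many relevant) connected components of $U\cap\bar{W}$ and places the arcs $\lambda_{i,j}$ only in $W\setminus\mathcal{C}$ (always path-connected, since $W$ is an open disk and $\mathcal{C}$ is countable and closed), so it never needs your extra hypothesis that $W\cap U$ be connected---a hypothesis which, incidentally, is not obviously satisfied in the application, where $W=h_{j_{0}}(B(p_{i},\delta_{0}))$ for an arbitrary homeomorphism $h_{j_{0}}$. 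Your version is fixed in the same way: simply route the $\lambda$-arcs through $W\setminus\mathcal{C}$ instead of $(W\cap U)\setminus\mathcal{C}$, and the added assumption disappears.
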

        \begin{subsubproof}
        	This follows from a standard argument.  By compactness, there are finitely many connected components $U_{1},\dots U_{q}$ of $U\cap \bar{W}$ such that $\bar{W}\cap\bar{U}'\subseteq\bigcup_{i=1}^{q}U_{i}$. Choose some point $y_{i}\in U_{i}\cap (W\setminus\mathcal{C})$ for $1\leq i\leq q$. Now  $W\setminus\mathcal{C}$ is connected and as $\mathcal{L}$ it suffices to choose some finite collection of arcs $\lambda_{i,j}$ in $W\setminus\mathcal{C}$ between $y_{i}$ and $y_{j}$ for every pair of distinct $i,j\in\{1,2,\dots q\}$.
        \end{subsubproof}
        
        For fixed $1\leq i\leq m$ let $\lambda_{-},\beta_{-},\gamma_{-}$ be the families that result from applying Observation \ref{o: finitely many} with $W=h_{j_{0}}(B(p_{i},\delta_{0}))\setminus\bigcup_{l\neq i}\mathcal{Q}^{l}$, $x=p_{i}$, $U=B(p_{i},3\delta_{0})$ and $U'=B(p_{i},2\delta_{0})$ and $\mathcal{C}=\mathcal{Q}$.
        
        For simplicity, write  $\lambda^{i}_{n,n'}=\lambda_{q_{n}^{i},q_{n'}^{i}}$ and so forth. Let $\mu_{n,n'}^{i}=\beta_{n,n'}^{i}*\lambda_{n,n'}^{i}*\gamma_{n,n'}^{i}$.
        We can assume that $\lambda^{i}_{n,n'}$ and hence $\mu_{n,n'}^{i}$ and $\sigma_{n,n'}^{i}$  intersects $\bar{B}(p_{l},2\delta_{0})$ in a finite collection of arcs for $1\leq l\leq m$, $l\neq m$.
        
        We first note that it follows from the finiteness of $\mathcal{L}$ that the image $\mathcal{S}^{i}$ of the collection $\{\sigma_{n,n'}^{i}\}$ in $\pi_{1}(S_{\bar{p}\setminus\{p_{i}\}},p_{i})$ in the statement is finite.
        
        Indeed, if $\lambda_{n_{0},n_{1}}^{i}=\lambda_{n_{2},n_{3}}^{i}=:\lambda$ then we can rewrite $(\sigma_{n_{0},n_{1}}^{i})^{-1}\sigma_{n_{2},n_{3}}^{i}$ as
        $$
        \nu_{n_{1}}^{i}*(\gamma_{n_{0},n_{1}}^{i})^{-1}*\lambda^{-1}*(\beta_{n_{0},n_{1}}^{i})^{-1}*(\nu_{n_{0}}^{i})^{-1}*\nu_{n_{2}}^{i}*\beta_{n_{0},n_{1}}^{i}*\lambda*\gamma_{n_{2},n_{3}}^{i}*(\nu_{n_{3}}^{i})^{-1}
        $$
        Since the simple closed paths $(\beta_{n_{0},n_{1}}^{i})^{-1}*(\nu_{n_{0}}^{i})^{-1}*\nu_{n_{2}}^{i}*\beta_{n_{0},n_{1}}^{i}$ and
        $\nu_{n_{1}}^{i}*(\gamma_{n_{0},n_{1}}^{i})^{-1}*(\gamma_{n_{2},n_{3}}^{i})*(\nu_{n_{3}}^{i})^{-1}$ are contained in
        a disk in $S_{\bar{p}\setminus\{p_{i}\}}$ they must be homotopic to the identity there. Therefore so is $(\sigma_{n_{0},n_{1}}^{i})^{-1}\sigma_{n_{2},n_{3}}^{i}$.

        In order to ensure the finiteness of the homotopy classes of the $\rho^{i}_{n',n}$ we will need the following two standard observations, whose proof we include for the sake of completeness:
        \begin{observation}
        	\label{o: perturbing a curve}Let $T$ be a punctured surface, $U\subseteq T$ an open disk
        	, $p\in U$ and $\alpha$ a curve based at some $*\nin U$. Then given any $q\nin im(\alpha)$ and any $g\in G$ supported in $U$  sending $q$ to $p$, the homotopy class of $g(\alpha)$ in $T_{p}$ depends only on the connected component of $U\setminus im(\alpha)$ to which the point $q$ belongs.
        \end{observation}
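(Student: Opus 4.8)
The plan is to fix two points $q,q'$ lying in the same connected component $C$ of $U\setminus im(\alpha)$, together with homeomorphisms $g,g'$ supported in $U$ with $g(q)=g'(q')=p$, and to prove $[g(\alpha)]=[g'(\alpha)]$ in $\pi_{1}(T_{p},*)$; here both loops are based at $*$ since $*\nin U$, and neither passes through $p$ since $q\nin im(\alpha)$ and $g,g'$ are injective. I would carry this out in two steps: first reduce to the case $q=q'$ by means of a point push performed inside $U$ and away from $im(\alpha)$, and then dispose of the case $q=q'$ using the Alexander lemma.

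For the reduction we may assume $q\neq q'$. Since $im(\alpha)$ is compact, hence closed in $T$, the set $U\setminus im(\alpha)$ is open and its component $C$ is open and path connected, so we may pick an embedded arc $\gamma$ in $C$ from $q'$ to $q$. As $im(\gamma)$ is a compact subset of the open set $U\setminus im(\alpha)$, there is an open set $W$ with $im(\gamma)\subseteq W\subseteq\bar{W}\subseteq U\setminus im(\alpha)$, and the point push $\psi:=\mathcal{P}^{W}_{\gamma}$ recalled above is supported in $W$ and sends $q'$ to $q$. Because $W\cap im(\alpha)=\emptyset$ we have $\psi\circ\alpha=\alpha$ verbatim, so $g'':=g'\circ\psi^{-1}$ is again supported in $U$, satisfies $g''(q)=p$ and $g''(\alpha)=g'(\alpha)$; hence it suffices to compare $g$ with $g''$, reducing matters to $q=q'$.

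Assume then $q=q'$ and put $k:=g'\circ g^{-1}$, a homeomorphism supported in $U$ that fixes $p=g(q)=g'(q)$. Its restriction to the closed disk $\bar{U}$ fixes $\partial\bar{U}$ pointwise and fixes the interior point $p$, so the Alexander lemma (Fact \ref{f: alexander}) provides an isotopy from it to the identity rel $\partial\bar{U}\cup\{p\}$; extending each level by the identity on $T\setminus U$ produces an isotopy of $T$ through homeomorphisms fixing $p$, hence an isotopy of $T_{p}$. Applying this isotopy to the loop $g(\alpha)$ — which avoids $p$ at every level, each level being injective and fixing $p$ — gives a based homotopy in $T_{p}$ from $k(g(\alpha))=g'(\alpha)$ to $g(\alpha)$, so $[g(\alpha)]=[g'(\alpha)]$, as required.

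Essentially everything here is routine, the only substantive input being the invocation of the Alexander lemma. The two places that call for a little attention are ensuring that a suitable embedded arc $\gamma$ and a neighbourhood $W$ disjoint from $im(\alpha)$ can be found inside $U$ (which is where openness of $U\setminus im(\alpha)$, i.e.\ closedness of the compact set $im(\alpha)$, is used), and the bookkeeping that none of the homeomorphisms or isotopies in play ever moves a point of the relevant loop onto $p$. I expect this bookkeeping, rather than any genuinely difficult step, to be the main thing to get right.
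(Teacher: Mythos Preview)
Your proof is correct and follows essentially the same route as the paper: both take an auxiliary homeomorphism supported in the component $C\subseteq U\setminus im(\alpha)$ to pass between $q$ and $q'$ (your $\psi$, the paper's $g''$), and then invoke the Alexander lemma on the resulting composition supported in $U$ and fixing the relevant marked point. The only cosmetic differences are that the paper argues by contradiction and applies Alexander at the puncture $q'$, whereas you argue directly in two steps and apply Alexander at the puncture $p$; these are the same computation viewed through conjugation by $g'$.
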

        \begin{subsubproof}
        	Assume that $q$ and $q'$ are two points in the same connected component $C$ of $U\setminus im(\alpha)$ and $g,g'$ supported in $U$ map $q$ and $q'$ respectively to $p$. If
        	$g(\alpha)\nsimeq_{p} g'(\alpha)$, then $\alpha\nsimeq_{q} g^{-1}g'(\alpha)$.
        	Take some $g''$ supported in $C$ mapping $q$ to $q'$, then
        	$\alpha=g''\alpha\nsimeq_{q'}g''g^{-1}g\alpha$. But by Fact \ref{f: alexander} the map $g''g^{-1}g$ must be the identity in $\MCG(T_{q'})$: a contradiction.
        \end{subsubproof}

        The following follows from a standard application of transversality.
        \begin{observation}
        	For any any embedded disk $D$ contained in some open set $W$ and any finite collection $\mathcal{M}$ of disjoint simple arcs between points outside of $W$ there exists another closed disk $D'$ with $D\subseteq D'\subseteq W$ such that each  $\mu\in\mathcal{M}$ intersects $D'$ in a finite collection of arcs.
        \end{observation}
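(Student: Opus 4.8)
Since the assertion is invariant under ambient homeomorphisms of $S$, and finitely many disjoint simple arcs in a surface are tame, I would begin by applying a homeomorphism of $S$ carrying every $\mu\in\mathcal{M}$ to a smooth arc, and assume henceforth that each $\mu$ is smooth. (This reduction is what makes transversality meaningful below; alternatively one could replace the smooth structure of $S$ by a homeomorphic one in which all the $\mu$ are smooth, as only the topological conclusion that $\mu\cap D'$ is a finite union of arcs is at stake.)

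The plan is to take $D'$ to be the disk bounded by a smooth circle put in general position with respect to $\bigcup\mathcal{M}$. First, using a bicollar of $\partial D$ together with $D\subseteq W$ and the compactness of $D$, choose a closed disk $D''$ with $D\subseteq int(D'')\subseteq D''\subseteq W$. Then $A:=int(D'')\setminus D$ is open in $S$, and by the Schoenflies theorem (applied inside the open disk $int(D'')$) it is an open annulus. The crucial point is that any simple closed curve $C\subseteq A$ which is non-contractible in $A$ bounds, inside $int(D'')$, a closed disk $D'$ with $D\subseteq D'\subseteq D''\subseteq W$: were $D$ contained in the annular complementary region of $C$, the curve $C$ would bound a disk inside $A$ and hence be contractible there. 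So it suffices to produce such a $C$ meeting each $\mu$ in finitely many points.

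To do this I would pick a smooth simple closed curve $C_{0}\subseteq A$ that is non-contractible in $A$ (such curves exist since $A$ is a smooth open annulus) and invoke the transversality theorem: a $C^{0}$-small smooth isotopy of $C_{0}$, supported in a neighbourhood of $C_{0}$ contained in $A$, produces a circle $C$ transverse to each of the smooth $1$-submanifolds $\mu\cap A$ (the endpoints of the members of $\mathcal{M}$ lie outside $W\supseteq A$, so inside $A$ these $\mu$ have no boundary). Being small and supported in $A$, the isotopy keeps $C$ inside $A$ and non-contractible there, so $C$ bounds a disk $D'$ as above, and transversality of the compact curve $C$ with the $1$-submanifolds $\mu$ forces $C\cap\mu$ to be finite for each $\mu$. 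Since in addition the endpoints of $\mu$ lie outside $W\supseteq D'$, the set $\mu\cap D'$ consists precisely of the finitely many sub-arcs of $\mu$ cut out by the finitely many points of $\mu\cap\partial D'=\mu\cap C$; this is the required finite collection of arcs.

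I expect the only delicate point to be the reduction to smooth (equivalently, tame) arcs together with making precise the meaning of transversality in that context; everything after that is routine general position.
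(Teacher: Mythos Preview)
Your argument is correct and is precisely the kind of standard transversality argument the paper has in mind; the paper itself gives no proof beyond the phrase ``a standard application of transversality.'' If anything you have supplied more care than the paper does, in particular in isolating the annulus $A$ so that the perturbed curve still bounds a disk containing $D$ and contained in $W$.
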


        Consider the loop $\rho_{n',n}^{i}:=(P_{n',n}^{i})^{-1}\circ\sigma_{n',n}^{i}=\nu_{n'}^{i}*((P^{i}_{n',n})^{-1}\circ\mu_{n',n}^{i})*(\nu_{n}^{i})^{-1}$. This is a loop in $S_{p_{1},\dots p_{i-1},p_{i+1},\dots p_{m}}$, since im  $q_{n}^{l},q_{n'}^{l}\notin im(\alpha)$ for $l\neq i$.
        
        For $1\leq l\leq m$, $l\neq i$ let $F_{l}$ be a closed embedded disk with $B(p_{l},2\delta_{0})\subseteq F_{l}\subseteq B(p_{l},3\delta_{0})$ and such that $\lambda_{n,n'}^{i}$ (and hence $\sigma_{n,n'}^{i}$) intersects $F_{l}$ in a finite collection of arcs.
        
        By iteratively applying Observation \ref{o: perturbing a curve}, we conclude that the homotopy class of $\rho_{n',n}^{i}$ depends only on the connected component of $F_{l}\setminus im(\mu_{n',n}^{i})$ in which $q_{n'}^{l}$ lies for $1\leq l<i$
        as well as the connected component of $F_{l}\setminus im(\mu_{n',n}^{i})$ in which one $q_{n}^{l}$ lies for $i<l\leq m$. Since in each case there are only finitely many possibilities the result follows.
        
        The existence of $\epsilon$ such that $N_{\epsilon}(im(\mu^{i}_{n',n}))$ does not intersect $\mathcal{Q}^{l}$ for $1\leq l\leq m$, $l\neq i$ is clear.
        \end{subproof}
        
        Choose $\eta>0$ such that
        $h_{j_{0}}B(p_{i},\eee)$ is $\eta$-branching
        and let $\delta_{1}=\min\{\eta,\delta_{0}\}$.
        
        By another application of Roelcke precompactness, after passing to a subsequence again we may assume that for all distinct $n<n'\leq N$ there exists  $u'_{n,n'},u''_{n,n'}\in U_{\delta_{1}}$ such that $u''u_{n'}f_{n'}=hu_{n}f_{n}u'$. Fix $n'=0$ and for the time being also $n>0$ and let $u'=u'_{0,n},u''=u''_{0,n}$.
        
        Let also $\alpha$ be any essential simple closed curve or simple arc between boundary points $\alpha$ that does not intersect any of the disks $\bar{B}(p_{i},\e)$. It follows that $u'\circ\alpha$ does not intersect $B(p_{i},\ee)$ for any $1\leq i\leq m$. Therefore $u_{n}f_{n}u'\circ\alpha$ does not intersect
        $h_{j_{0}}B(p_{i},\eee)\subseteq u_{n}f_{n}B(p_{i},\ee)$ for any $1\leq i\leq m$ and the same holds for $u_{0}f_{0}\circ\alpha$.   
        
  \setlength{\abovedisplayskip}{7pt}
  \setlength{\belowdisplayskip}{7pt}
        On the other hand, since $u''\in U_{\delta}$ from the choice of $\delta_{1}$ and $\delta_{0}$ it follows that $$d((u_{0}f_{0}\circ\alpha)(t),(u_{n}f_{n}u'\circ\alpha)(t))<\min\{\eta,\delta_{e}\}.$$
        Lemma \ref{l: non-linearity} yields that $u_{0}f_{0}\circ\alpha\simeq_{K} u_{n}f_{n}u'\circ\alpha $, where $K=\bigcup_{i=1}^{m}K_{i}$.
   
        Now, $\alpha$ and $u'\circ\alpha$ can be homotoped to each other through a homotopy with values in
        $S\setminus \bigcup_{i=1}^{m}B(p_{i},\ee)$, it follows that $ u_{n}f_{n}u'\circ\alpha\simeq_{K}u_{n}f_{n}\circ\alpha$

        Consider the commuting products $\p_{\mu_{0,n}}=\prod_{i=1}^{m}\p_{\mu^{i}_{0,n}}^{\epsilon}$ and
        $\p_{\nu_{n}}=\prod_{i=1}^{m}\p_{\nu^{i}_{n}}^{\delta_{0}}$. We may assume $(\p^{\delta_{0}}_{\nu^{i}_{0,n}})^{-1}=\p^{\delta_{0}}_{(\nu^{i}_{0,n})^{-1}}$.
        Since $\p_{\mu_{0,n}}$ is supported in $K$, for any $\alpha$ as above we have $\p_{\mu_{0,n} }u_{0}f_{0}\circ\alpha\simeq_{K}u_{0}f_{0}\circ\alpha$. It follows that  $$\p_{\mu_{0,n}}u_{0}f_{0}\circ\alpha\simeq_{K}u_{n}f_{n}$$
        Since $\qin\in K$ and $\p_{\nu_{n}}^{-1}(\qin)=p_{i}$ we have
        $$\p_{\nu_{n}}^{-1}\p_{\mu_{0,n}}u_{0}f_{0}\circ\alpha\simeq_{\bar{p}}\p_{\nu_{n}^{-1}}u_{n}f_{n}\circ\alpha .$$        
        Since both $\p_{\nu_{n}}^{-1}\p_{\mu_{0,n}}u_{0}f_{0}$ and $\p_{\nu_{n}}^{-1}u_{n}f_{n}$ belong to $H_{\bar{p}}$ and $\alpha$ ranges over a set of curves/arcs containing representatives of all homotopy classes in $S_{\bar{p}}$ we conclude that the images of these two elements in $\MCG(S_{\bar{p}})$ are equal. Expanding $f_{n}$ as $w_{n}b_{n}$ with $b_{n}\in G_{\bar{p}}$ we obtain
        $$
        [\p_{\nu_{n}}^{-1}\p_{\mu_{0,n}}\p_{\nu_{0}}][\p_{\nu_{0}}^{-1}u_{0}w_{0}][b_{0}]=[\p_{\nu_{n}}^{-1}u_{n}w_{n}][b_{n}].
        $$        
        Notice that
        $\p_{\nu_{n}}^{-1}u_{n}w_{n}\in G_{\bar{p}}$.
        On the other hand, we have $\p_{\nu^{i}_{n}}^{\delta_{0}}\in U_{6\delta_{0}}$, since it is supported on $N_{\delta_{0}}(im(\nu_{n}))\subseteq B(p_{i},3\delta_{0})$. The latter also implies that the supports of  $\p_{\nu^{i}_{n}}^{\delta_{0}}$ for different values of $i$ are disjoint and hence $\p_{\nu_{n}}\in U_{6\delta_{0}}$. It follows that $\p_{\nu_{n}}^{-1}u_{n}w_{n}\in U_{6\delta_{0}+\delta_{0}+\delta_{1}}\subseteq U_{\delta_{e}}$. By Observation \ref{o: homotopic to the identity} we then have  $[\p_{\nu_{n}}^{-1}u_{n}w_{n}]=1\in \MCG(S_{\bar{p}})$ and like-wise
        $[\p_{\nu_{0}}^{-1}u_{0}w_{0}]=1$.
        
        We now claim that   $[\p_{\nu_{n}}^{-1}\p_{\mu_{0,n}}\p_{\nu_{0}}]\in \p(\mathcal{R}^{1})\cdot\p(\mathcal{R}^{2})\dots \p(\mathcal{R}^{m})$, where $\mathcal{R}^{i}$ is the finite set in Lemma \ref{l: finitely many rhos}. This concludes the proof, since letting $n>0$ range we obtain that the set $\{[b_{n}]\}_{n\in \N}$ has to be finite, contradicting the choice of $b_{n}$.
        
        The claim follows from a straightforward algebraic manipulation, which we sketch below, justifiably ignoring the choice of push representatives: 
        \begin{equation*}
        \begin{gathered}
        	[\p_{\nu_{n}}^{-1}\p_{\mu_{0,n}}\p_{\nu_{0}}]=
        	[(\p_{\nu_{n}^{1}}^{-1}\prod_{i=2}^{m}\p_{\nu_{n}^{i}}^{-1})
        	(\p_{\mu_{0,n}^{1}}\prod_{i=2}^{m} \p_{\mu_{0,n}^{i}})
        	(\p_{\nu_{0}^{1}}\prod_{i=2}^{m}\p_{\nu_{0}^{i}})]\\
        	= [ \p_{\nu_{n}^{1}}^{-1}\p_{\mu_{0,n}^{1}}^{\prod_{i=2}^{m}\p_{\nu_{n}^{i}}}\p_{\nu_{0}^{1}}
        	(\prod_{i=2}^{m}\p_{\nu_{n}^{i}}^{-1}
        	\prod_{i=2}^{m} \p_{\mu_{0,n}^{i}})^{\p_{\nu_{0}^{1}}}
        	\prod_{i=2}^{m}\p_{\nu_{0}^{i}}]\\=
        	\p([\rho^{1}_{0,n}])[
        	(\prod_{i=2}^{m}\p_{\nu_{n}^{i}}^{-1}
        	\prod_{i=2}^{m} \p_{\mu_{0,n}^{i}}
        	\prod_{i=2}^{m}\p_{\nu_{0}^{i}})^{\p_{\nu_{0}^{1}}}]\\
        	=\p([\rho^{1}_{0,n}])\p([\rho^{2}_{0,n}])\dots \p([\rho^{m}_{0,n}])
        \end{gathered}
        \end{equation*}

        \end{proof}

\section{Questions}
  
  The torus admits a $1$-transitive Roelcke precompact subgroup of homeomorphisms, namely the one given by the diagonal action of $Homeo(S^{1})\times Homeo(S^{1})$ on $S^{1}\times S^{1}$.
  
  \begin{question}
  Is the bound $\zeta(S)$ on the degree of transitivity on $S$ of a Roelcke precompact subgroup of $Homeo_{\partial S}(S)$ sharp in those cases in which $\zeta(S)>1$ and $S$ is not a torus?
  \end{question}
  \begin{question}
  	Is the subgroup of $Homeo(S^{1}\times S^{1})$ given above the only $1$-transitive Roelcke precompact subgroup on $S^{1}\times S^{1}$ up to conjugacy? 
  \end{question}
  
  \begin{question}
  Can a transitive (resp. $\omega$-transitive) group of homeomorphisms of an $m$-manifold, $m\geq 3$ be Roelcke precompact?
  \end{question}

  %
  

  \bibliographystyle{plain}
  \bibliography{bibliography}
  
\end{document}